\documentclass[12pt]{amsart}
\usepackage{amssymb}
\usepackage{amsfonts}
\usepackage{amsmath}
\usepackage{amsthm}
\usepackage{remark}
\usepackage{enumerate}
\usepackage{mathrsfs} 
\usepackage{hyperref}
 
\newtheorem{thm}{Theorem}[section]

\newtheorem{lemma}[thm]{Lemma}
\newtheorem{corollary}[thm]{Corollary}
\newtheorem{proposition}[thm]{Proposition}
\newremark{definition}[thm]{Definition}
\newremark{remark}[thm]{Remark}
\newremark{example}[thm]{Example}
\newremark{notation}[thm]{Notation}
\newremark{algo}[thm]{Algorithm}
\newremark{test}[thm]{Test}

\newcommand{\disc}{\mathrm{disc}}
\newcommand{\ord}{\operatorname{ord}}
\newcommand{\Spec}{\operatorname{Spec}}
\newcommand{\Z}{\mathbb Z}
\newcommand{\p}{\mathfrak p}

\begin{document}

\title[Computing minimal Weierstrass equations]{\bf Computing minimal Weierstrass equations of hyperelliptic curves}

\author{Qing Liu}

\begin{abstract}
We describe an algorithm for determining a minimal Weierstrass equation
for hyperelliptic curves over principal ideal domains.  When the curve
has a rational Weierstrass point $w_0$, we also give a similar algorithm for
determining the minimal Weierstrass equation with respect to $w_0$. 
\end{abstract}

\thanks{I would like to thank Bill Allombert for clarifications regarding some computational aspects in this article and for pointing out related references. I would also like to thank the referees for their thorough reading. Thank you also to the referees and Bill Allombert for suggestions which led to improvements in the presentation of this manuscript.}

\subjclass[2010]{11G30, 11G20} 

\address{Universit\'e de Bordeaux, Institut de Math\'ematiques de 
  Bordeaux, CNRS UMR 5251, 33405 Talence, France}

\address{\href{http://orcid.org/0000-0001-6884-139X}{Orcid iD 0000-0001-6884-139X}}

\email{Qing.Liu@math.u-bordeaux.fr}

\maketitle

Tate's algorithm \cite{Tate} determines the reduction type of elliptic
curves over discrete valuation rings with perfect residue field. In
particular it determines the minimal Weierstrass equation. Over number
fields with trivial class number, Laska \cite{Laska} gave a faster method
to determine the minimal Weierstrass equation.  

More generally, for any hyperelliptic curve $C$ of genus $g\ge 1$ over
a discrete valuation ring, there is a natural notion of minimal 
Weierstrass equations. This question is studied in \cite{Loc} and
\cite{LTR}. An algorithm for determining a
minimal Weierstrass equation is sketched in \cite{LTR}.
See also \cite{CFS}, \S 4, for hyperelliptic curves of
genus $1$ (not necessarily elliptic). Here we deal with hyperelliptic
curves of any genus $g\ge 1$ and also with those having a rational Weierstrass point (in
which case there is a notion of minimal \emph{pointed} Weierstrass
equation as for elliptic curves, see \cite{Loc} or below) over
principal ideal domains. 
The aim of the present work is to make our algorithm completely
explicit. It is now implemented over $\mathbb Z$ in PARI release 2.15
(\cite{pari}) by B. Allombert. 

Let us briefly present the content of this work. 
Let $A$ be a principal ideal domain with perfect residue fields at its
maximal ideals. Let $C$ be a hyperelliptic curve of genus $g\ge 1$ 
over $K:=\mathrm{Frac}(A)$, given by an integral Weierstrass equation
\begin{equation}
  \label{eq:start}
  y^2+Q(x)y=P(x) 
\end{equation} 
with $Q(x), P(x)\in A[x]$ such that $\deg Q\le g+1$ and $\deg P\le 2g+2$.
Such an equation is said to be \emph{minimal}, when $A$ is local, if its
discriminant $\Delta$ (\cite{LTR}, \S 2) has the smallest valuation among all
integral Weierstrass equations describing the same curve over $K$. 
When $A$ is global, the equation is said to be \emph{minimal} if it
is minimal at all localizations of $A$ at maximal ideals. It is well
known that such an equation exists because $A$ is principal, see
for instance \cite{LTR}, \S3, Proposition 2. Note that for a given $C$
there are finitely many minimal Weierstrass equations (up to the
natural action of $\mathrm{GL}_2(A)$), but in general they are not unique. 
For instance, an elliptic curve of type $I_n$ ($n\ge 1$) over a
discrete valuation ring has $n$ non-equivalent (Definition~\ref{sb-W})
minimal Weierstrass equations as (non pointed) hyperelliptic curve.

When $C$ has a rational Weierstrass point $w_0$, an integral
\emph{pointed Weierstrass} equation of $(C, w_0)$ is an
equation
$$
y^2+Q(x)y=P(x)
$$
over $A$ with $\deg Q\le g$, $P(x)$ monic of degree $2g+1$ and such
that $w_0$ is the pole of $x$. 
A minimal pointed Weierstrass equation exists and is unique up to the
transformations 
    $$x=u^2x_1 +c, \quad y=u^{2g+1}y_1+H(x)$$
    with $u\in A^*$, $c\in A$ and $H(x)\in A[x]$. See Lemma~\ref{p-c}.

The principle of the minimization ({\it i.e.}\ finding a minimal Weierstrass
equation) we use is to successively minimize at a finite list of bad
primes. At each bad prime $\p$, we first normalize the
equation ({\it i.e.}\ find a Weierstrass equation defining a normal
scheme). This is very simple at primes of odd residue characteristic,
but requires an appropriate algorithm otherwise 
(Algorithm~\ref{algo-normalization}). Then the minimality at $\p$ is checked by
computing the multiplicities $\lambda(p_0)$ at very special rational
points $p_0$ of the reduction mod $\p$ (see
Proposition~\ref{min-cond}). These points correspond to
roots of high order of some polynomials over the residue field at $\p$
(Lemma~\ref{l-d}). If the minimality condition is not satisfied, a new
candidate (normal) Weierstrass equation is given (\S~\ref{dilat}) and we can
restart the minimality checking.

To get a global minimal Weierstrass equation,  we notice that our
minimization process at a prime dividing $2$ does not change the
valuation of the discriminant 
of the initial equation at other primes. Similarly the minimization process at odd
primes will keep unchanged the discriminant of the initial equation at the other
odd primes, but it will affect the discriminant at primes dividing $2$. Nevertheless
we use an easy trick to combine minimal Weierstrass equations at odd
primes and at primes dividing $2$ (Lemma~\ref{odd-2-even}). 
The strategy is similar for pointed Weierstrass equations. The 
procedure is then much simpler because we only use transformations
of the form $x=u^2x_1+c$, $y=u^{2g+1}y_1+H(x)$. 
\medskip 

Note that it is essential here to suppose $A$ is principal (or at
least that the primes dividing the discriminant of an initial equation
over $A$ are principal), as otherwise a global minimal Weierstrass
equation may not exist. 
\medskip 

In \S~\ref{normalization} and \S~\ref{lambda} we explain the process
of normalization and the computation of the multiplicity $\lambda$.
In \S~\ref{min-crit} and \S~\ref{pointed-min} we give the minimality criterion
respectively for Weierstrass equations and pointed Weierstrass
equations. Finally algorithms to find minimal (resp. pointed)
Weierstrass equations are described in the last two sections.
\medskip 

\noindent {\bf Notation}   We denote by $A$ a principal ideal domain with 
field of fractions $K$ such that its residue fields at maximal ideals
are perfect. Primes of $A$ will be denoted by $\p$.
When $\p$ is fixed, we denote by $k=k(\p)$ the residue field at $\p$,
$p=\mathrm{char}(k)$ 
and $\pi\in A$ a generator of $\p$.

For any $H(x)\in A[x]$, its image in $k[x]$ is denoted by
$\bar{H}(x)$.

The normalized  valuation on $K$ defined by $\p$ will be denoted by
$v_\p$ or just $v$ if there is no ambiguity. 

An element $a\in A$ is \emph{odd} if $a\ne 0$ and $aA+2A=A$.
We call $\p$ an \emph{odd prime} if
$\p +2A=A$. Otherwise it is called an \emph{even prime}.

\begin{section}{Weierstrass models} 

\begin{definition}\label{sb-W}   
The projective scheme over $A$ defined by Equation~\eqref{eq:start} is
denoted by $W$. It is obtained by glueing the affine schemes 
$$
W_0=\operatorname{Spec} A[x,y]/(y^2+Q(x)y-P(x) )
$$
and
$$
W_\infty=\operatorname{Spec} A[t, z]/(z^2+t^{g+1}Q(t^{-1})z-t^{2g+2}P(t^{-1}))
$$
along the identification $t=1/x, z=y/x^{g+1}$. This scheme is integral
and flat over $A$, with generic fiber isomorphic to $C$. This is a
\emph{Weierstrass model of $C$ over $A$}. It is said to
    be \emph{normal} if $W$ is a normal scheme.  An \emph{isomorphism} of 
    Weierstrass models of $C$ is an isomorphism of 
  $A$-schemes compatible with the isomorphisms with $C$.

If $W\times_{\Spec A} \Spec A_\p$ satisfies some property, then we say
that $W$ satisfies this property \emph{at $\p$}.

We say that two Weierstrass equations of $C$ over $A$ are
\emph{equivalent at $\p$} if the associated Weierstrass models
are isomorphic over $A_\p$.   This implies that their discriminants
have the same valuation at $\p$ (see below). 
\end{definition} 

\begin{definition} 
Let $F(x)=4P(x)+Q(x)^2$ with leading coefficient $a$. 
Recall (\cite{LTR}, \S 2) that when $\mathrm{char}(K)\ne 2$,
the \emph{discriminant} $\Delta$ of Equation~\eqref{eq:start} is given by 
$$\Delta=\left\lbrace{
\begin{matrix} 
2^{-4(g+1)}\disc(F)\hfill & \quad \text{\rm if} \ \  \deg F=2g+2 \\ 
\noalign{\bigskip}
2^{-4(g+1)}a^2\disc(F) & \quad \text{\rm if} \ \  \deg F=2g+1.  
\end{matrix}
}\right.
$$
So if $Q=0$, then $\Delta=2^{4g}\disc(P)$.
\end{definition}
\medskip

Other integral Weierstrass equations of $C$ are obtained by change of variables
\begin{equation}
  \label{eq:cvar}
  x=\dfrac{ax_1+b}{cx_1+d}, \quad
y=\dfrac{ey_1+H(x_1)}{(cx_1+d)^{g+1}},
\end{equation} 
with $a, b, c, d, e\in A$, $H(x_1)\in A[x_1]$, and $ad-bc, e\ne 0$.  
The corresponding Weierstrass models are isomorphic if and only if
$ad-bc, e \in A^*$ (invertible). 

The discriminant $\Delta_1$ of the equation with $x_1, y_1$ is
given by
\begin{equation} \label{eq:d1} 
\Delta_1=e^{-4(2g+1)}(ad-bc)^{2(g+1)(2g+1)}\Delta. 
\end{equation} 
\end{section}

\begin{section}{Normalization}\label{normalization} 

  \begin{notation}  Let $\p$ be a prime of $A$.
For any $H(x)=\sum_i a_ix^i\in A[x]$, we denote by 
\begin{equation}
  \label{eq:v}
  v_\p(H):=\min_i \{ v_\p(a_i) \}\in \mathbb N\cup \{ +\infty\} 
\end{equation}
(or just $v(H)$). This is the Gauss valuation on $K(x)$ with respect
to the variable $x$ extending $v_\p$. 
\end{notation}

  Let $W$ be the Weierstrass model over $A$ defined by Equation~\eqref{eq:start}.
  
\begin{lemma}[Normalization away from $2$]\label{normalize-n2}
Suppose that $\mathrm{char}(K)\ne 2$.  Let $e^2$ be a biggest odd square
factor of the content $\operatorname{cont}(4P+Q^2)$ of $4P+Q^2$.   Then 
the equation 
$$z^2=e^{-2}(4P(x)+Q^2(x))$$
defines the normalization of $W$ at all odd primes of $A$.
 \end{lemma}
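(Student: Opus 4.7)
The plan is to reduce, via completing the square, to a concrete integral-closure computation in a quadratic extension. Fix an odd prime $\p$ of $A$, so that $2\in A_\p^*$. The substitution $z=2y+Q(x)$ is invertible over $A_\p$ and identifies $W_0\times_A\Spec A_\p$ with $\Spec A_\p[x,z]/(z^2-F)$, where $F:=4P(x)+Q^2(x)$. Setting $z=ez'$ then gives a finite birational extension $A_\p[x,z]/(z^2-F)\subset A_\p[x,z']/((z')^2-G)$, where $G:=e^{-2}F\in A[x]$; this larger ring lies inside the normalization of the smaller one, so it suffices to prove that $R:=A_\p[x,z']/((z')^2-G)$ is itself normal.

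For this, I would appeal to the standard description of the integral closure of $A_\p[x]$ in the quadratic extension $\operatorname{Frac}(R)$: an element $a+bz'$ (with $a,b\in K(x)$) is integral over $A_\p[x]$ if and only if its trace $2a$ and norm $a^2-b^2G$ both lie in $A_\p[x]$. Since $2\in A_\p^*$, this forces $a\in A_\p[x]$ and $b^2G\in A_\p[x]$. Writing $b=h/g$ in lowest terms in the UFD $A_\p[x]$, the latter condition reduces to $g^2\mid G$ in $A_\p[x]$.

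Next I would argue that $g$ must be a unit. Since $C$ is smooth over $K$ and $\mathrm{char}(K)\ne 2$, the polynomial $F=4P+Q^2$ is squarefree in $K[x]$, and hence so is $G$. Therefore any $g\in A_\p[x]$ with $g^2\mid G$ must be a nonzero constant, i.e., lie in $A_\p$. By the maximality of $e$, we have $v_\p(G)=v_\p(\operatorname{cont}(F))-2v_\p(e)\in\{0,1\}$, so $\pi^2\nmid G$; this rules out $\pi\mid g$, forcing $g$ to be a unit. Hence every integral element of $\operatorname{Frac}(R)$ already lies in $R$, and $R$ is normal.

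Finally, the same argument applied to the chart $W_\infty$ (in the variables $t=1/x$, $z=y/x^{g+1}$, where the defining equation has identical shape) glues with the affine computation to give the normalization of the entire projective model at $\p$. I expect the main technical point to be the integral-closure calculation of the second paragraph: characterizing exactly which $b\in K(x)$ make $a+bz'$ integral over $A_\p[x]$ and extracting the clean UFD divisibility condition $g^2\mid G$; the rest is bookkeeping with $\operatorname{cont}$ and the valuation $v_\p$.
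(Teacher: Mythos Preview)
Your argument is correct and complete. The paper itself does not give a proof here but simply cites \cite{LTR}, Lemme~2(d); your write-up supplies exactly the standard self-contained argument one would expect behind that citation: complete the square over $A_\p$ (legitimate since $2\in A_\p^*$), then compute the integral closure of the UFD $A_\p[x]$ in the quadratic extension via the trace/norm criterion, using that $G=e^{-2}(4P+Q^2)$ is squarefree in $K[x]$ (smoothness of $C$) and has $v_\p(\mathrm{cont}(G))\le 1$ (maximality of $e$). The only cosmetic point worth tightening is the last paragraph: rather than ``gluing'' by hand, you can simply note that by the convention of Definition~\ref{sb-W} the single equation $z^2=G(x)$ already specifies the second chart $w^2=t^{2g+2}G(1/t)$, and your normality argument applies verbatim there since $t^{2g+2}G(1/t)$ has the same content as $G$ and is again squarefree in $K[t]$ (again by smoothness of $C$).
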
 

\begin{proof} See \cite{LTR}, Lemme 2(d), p. 4582.
\end{proof}

\begin{lemma}[Normalization at even primes]\label{normalize_at_2} 
  Let $\p$ be an even prime of $A$.
  \begin{enumerate}[{\rm (1)}] 
  \item   If one of the following conditions is satisfied, then $W$ is
    normal at $\p$: 
  \begin{enumerate}[{\rm (a)}]
  \item $v(Q)=0$ (then $W_k$ is reduced); 
  \item $v(Q)>0$ and $\bar{P}(x)$ is not a square in $k[x]$;
  \item $v(Q)>0$ and $v(P)=1$. 
  \end{enumerate}
\item  If the pair $Q(x), P(x)$ satisfies one of the above conditions, then 
  the pair $x^{g+1}Q(1/x), x^{2g+2}P(1/x)$ satisfies the same condition. 
\item If the pair $Q(x), P(x)$ satisfies none of the conditions of {\rm (1)},  there exists a change of variables
  $$y=\pi^n y_1 + H(x)$$
 with $n\ge 0$ and $H(x)\in A[x]$ such that 
 $Q_1(x):=\pi^{-n}(Q-2H)$, $P_1(x):=\pi^{-2n}(P+QH-H^2)$ belong to
 $A[x]$ and satisfy one of the conditions of {\rm (1)}. The 
  new equation
  $$ y_1^2+Q_1(x)y_1=P_1(x)$$
with $y_1=\pi^{-n}(y+H)$, then defines the normalization of $W$.  
\end{enumerate}
\end{lemma} 

\begin{proof} (1) The normality under (a) or (b) holds by  
 \cite{LTR}, Lemme 2(a)-(b), p. 4582. Use the same lemma, Part (c)
 when Condition (c) is satisfied. 

 (2) is straightforward.
 
 (3) The construction of $y_1$ is given in
 Algorithm~\ref{algo-normalization}. 
\end{proof}

\begin{remark}  
  \begin{enumerate}[{\rm (1)}]
  \item The transformation in Lemma~\ref{normalize_at_2}(3) does not
    affect the discriminant at other primes than $\p$, as the 
    new discriminant is the former discriminant divided
    by a power of $\pi$. On the other
hands, the normalization process at odd primes in Lemma~\ref{normalize-n2}
multiplies the discriminant by $(2e^{-1})^{4(2g+1)}$, therefore it 
does modify its even part. 
\item The converse of Lemma~\ref{normalize_at_2}(1) is
false. For example the equation $y^2=x^6+1$ over $\Z_2$ is normal (even minimal), 
but it does not satisfy any of the conditions of
\ref{normalize_at_2}(1). However the converse may fail only when $v(Q)>0$
and $\bar{P}$ is a non-zero square in $k[x]$ (indeed $W$ normal
implies that if $v(Q)>0$ then $v(P)\le 1$). In this case the
transformation at step (2.b) of Algorithm~\ref{algo-normalization} provides
a pair $Q_1, P_1$ for $W$ satisfying \ref{normalize_at_2}(1.c).
\end{enumerate}  
\end{remark}

\end{section}

\begin{section}{Multiplicity \texorpdfstring{$\lambda$}{lambda}}\label{lambda}

Fix a prime $\p$ of $A$. Recall that $v=v_\p$, $k=k(\p)$ and $p$ is
the characteristic of $k$. 

\begin{definition} \label{def:lambda} 
Let $H(x)\in A[x]$, let $c\in A$. We write $H(x)$ in the form (Taylor
expansion at $c$):   
$$H(x)=\sum_i a_{c,i}(x-c)^i,$$
and define
  \begin{equation}
    \label{eq:muc}
    \mu_c(H)=\min_i \{ v(a_{c,i})+i \} =v(H(\pi x+c)).  
  \end{equation}
The map $\mu_c$ depends only on the 
class of $c$ modulo $\pi$.  We have $\mu_c(0)=+\infty$ and, if $H\ne
0$, then $\mu_c(H)$ is the biggest integer $m$ such that $H(x)\in
(x-c, \pi)^mA[x]$. In fact $\mu_c$ is the restriction to
    $A[x]$ of the Gauss valuation on $K(x)$ with respect 
to the variable $\pi x+c$. Thus we have 
$$
\mu_c(H_1H_2)=\mu_c(H_1)+\mu_c(H_2), \quad
\mu_c(H_1+H_2)\ge \min \{ \mu_c(H_1), \mu_c(H_2) \}.
$$
\end{definition}

For any pair of polynomials $Q, P\in A[x]$, denote by 
  \begin{equation}
    \lambda_c(Q,P)=\min \{ 2\mu_c(Q), \mu_c(P) \}.
  \end{equation}
  As $\mu_c$ is a valuation, when $p\ne 2$ we have 
      \begin{equation}
        \label{eq:mu_not2}
 \lambda_c(Q,P)=\lambda_c(Q, 4P)\le \mu_c(4P+Q^2) 
      \end{equation}
If we denote by $\ord_{\bar{c}}\bar{H}$ the vanishing order of
$\bar{H}(x)\in k[x]$ at $\bar{c}$, then it follows immediately from the definition
that
\begin{equation}
  \label{eq:dl}
  \ord_{\bar{c}} \bar{H}\ge \mu_c(H). 
\end{equation}
  
\begin{definition}[\cite{LTR}, D\'efinition 10, p. 4589, case $r=1$] \label{def:l}
  Let $p_0\in W_0(k)$ (see Definition~\ref{sb-W})
  be a rational point. Let $\bar{c}=x(p_0)\in k$ be the 
$x$-coordinate of $p_0$ for some $c\in A$. We define the \emph{multiplicity
$\lambda(p_0)$} by 
\begin{equation}
\label{eq:lambda2}
\lambda(p_0)=\max
\{ \lambda_c (Q-2H, P+QH-H^2) \mid
H(x)\in A[x]  \}
\end{equation}
(we have $(y+H)^2+(Q-2H)(y+H)=P+QH-H^2$.)

The \emph{multiplicity of the pole of $x$} in $W_k$ is defined as
the multiplicity at $0$ of $z^2+x^{g+1}Q(1/x)=x^{2g+2}P(1/x)$. 
\end{definition}

Note that
\begin{equation}
   \lambda(p_0)=\mu_c(4P+Q^2), \quad \text{if \ } p\ne 2.
\end{equation}
Indeed, $\lambda(p_0)\le \mu_c(4P+Q^2)$ by
    Inequality~\eqref{eq:mu_not2}, and the inverse inequality holds 
    by taking $H=Q/2$ in \eqref{eq:lambda2}.

The next lemma explains how to compute $\lambda(p_0)$ and, starting
with a suitable equation, how to find a new pair $Q, P$ such that 
$\lambda(p_0)=\lambda_c(Q, P)$. 
This is partly sketched in \cite{LTR}, bottom of page 4590. 

\begin{lemma}\label{mup} Suppose that $p=2$, $W$ is normal at $\p$,
and that the equation
  $$ y^2+Q(x)y=P(x)$$
satisfies Lemma~\ref{normalize_at_2}(1). 
\begin{enumerate}[{\rm (1)}]
   \item If $2\mu_c(Q)\le \mu_c(P)$, then
    $\lambda(p_0)=2\mu_c(Q)=\lambda_c(Q, P)$.
  \item Suppose $2\mu_c(Q)>\mu_c(P)$. 
    \begin{enumerate}[{\rm (a)}] 
    \item If $\mu_c(P)$ is odd, then
    $\lambda(p_0)=\mu_c(P)=\lambda_c(Q, P)$. 
  \item Suppose $\mu_c(P)=2r\ge 0$ is even.  Write $P(x)=\sum_i a_{c,i}(x-c)^i$.
    \begin{enumerate}
    \item 
    If $\mu_c(P)=v(a_{c,i})+i$
for some odd $i$, then $\lambda(p_0)=\mu_c(P)=\lambda_c(Q, P)$. 
\item  Otherwise, let
  $$H_0(x)=\sum_{0\le i\le \min\{r, g+1\}} e_{c,i}(x-c)^{i}\in A[x]$$
with $e_{c,i}\in \pi^{r-i}A$ such that $(\pi^{i-r}e_{c,i})^2 \equiv  (\pi^{-(2r-2i)} a_{c,2i}) \mod \pi$. 
  Then
  $$\lambda_c(Q-2H_0, P+QH_0-H_0^2) > \lambda_c(Q, P).$$   
  Moreover, $v(H_0)=0$ if and only if $r\le g+1$ and $v(a_{2r})=0$.
\end{enumerate}
\end{enumerate}
\item We have $\lambda(p_0) \le 2g+3$. Moreover, if $\lambda_c(Q,P)<\lambda(p_0)$,
then there exists a new pair $Q_0, P_0\in A[x]$ for $W$ such that 
\begin{enumerate}[{\rm (a)}]
\item $Q_0\equiv Q \mod 2$, and $P_0 - P$ is congruent to a square
in $k[x]$ modulo $(\pi, Q)$;
\item in the case $v(Q)>0$ and $v(P)=1$, we have $Q_0 \equiv Q
 \mod 2\pi$, $P_0 - P \equiv 0 \mod \pi^2$; 
      \item The pair $Q_0, P_0$ satisfies the same condition in
  Lemma~\ref{normalize_at_2}(1) as $Q, P$; 
\item $\lambda_c(Q_0, P_0)=\lambda(p_0)$.
\end{enumerate} 
\end{enumerate}
\end{lemma}

\begin{proof} 
(1) We have $Q\ne 0$ because $\mu_c(0)=+\infty$.   
Let $H\in A[x]$. If $\mu_c(Q-2H)\le \mu_c(Q)$, then
$$\lambda_c(Q-2H, P+QH-H^2)\le 2\mu_c(Q-2H)\le 2\mu_c(Q)=\lambda_c(Q, P).$$ 
Suppose $\mu_c(Q-2H)>\mu_c(Q)$. Then $\mu_c(2H)=\mu_c(Q)$, so $\mu_c(H)=\mu_c(Q)-v(2)$.
As
$$\mu_c(P)\ge 2\mu_c(Q), \ \mu_c(QH)=2\mu_c(Q)-v(2), \ \mu_c(H^2)=2\mu_c(Q)-2v(2)
$$
we have $\mu_c(P+QH-H^2)=2\mu_c(Q)-2v(2)<2\mu_c(Q-2H)$ 
and
$$ \lambda_c(Q-2H, P+QH-H^2) =2\mu_c(Q)-2v(2)< 2\mu_c(Q)=\lambda_c(Q, P).$$ 
So $\lambda(p_0)=\lambda_c(Q,P)$.
\smallskip

(2) Suppose that $\lambda(p_0)>\lambda_c(Q,P)$. So
there exists $H\in A[x]$ such that
$$\lambda_c(Q-2H, P+QH-H^2) > \lambda_c(Q, P)=\mu_c(P).$$  
We have $P+QH-H^2=P+(Q-2H)H+H^2$. This implies that
$\mu_c(P)=2\mu_c(H)=2r\in 2\mathbb N$ (hence (2.a) is proved)
and $\mu_c(P+H^2)>2r$. Then  
$$ \pi^{-2r}P(\pi x+c) \equiv -(\pi^{-r}H(\pi x+c))^2 \mod \pi.$$  
Therefore, if $\lambda(p_0)> \mu_c(P)$, then 
$\mu_c(P)=\lambda_c(Q,P)$ is only reached by terms of even degrees $a_{c,2i}(x-c)^{2i}$.
This proves (2.b.i).

(2.b.ii) Now we have $\mu_c(P)=2r$ and $\mu_c(P)< v(a_{c,i})+i$ for
all odd $i$'s. By construction we see that $\mu_c(H_0)=r$,
 $\mu_c(P-H_0^2) > 2r$, and that $v(H_0)=0$ if and only if $r\le g+1$ and $v(a_{2r})=0$.
As  $\mu_c(QH_0)>2r$ and 
$$2\mu_c(Q-2H_0) \ge \min \{ 2\mu_c(Q), 2(r+v(2)) \}> 2r,$$
we have $\lambda_c(Q-2H_0, P+QH_0-H_0^2)>\lambda_c(Q, P)$.
\smallskip

(3) In the case (2.b.ii) we let temporarily $Q_0=Q-2H_0$,
$P_0=P+QH_0-H_0^2$. We may need to modify them later.
Property (a) is satisfied by construction.
We will first prove the inequality $\lambda_c(Q, P)\le 2g+3$ and, in the case
(2.b.ii), the same inequality for $Q_0, P_0$ and the
property (b). Property (c) is a direct consequence of (a) and (b). 

Notice that from the construction, we have
$\mu_c(F)\le v(F)+\deg F$ for all $F(x)\in A[x]$.

(3.1) If $v(Q)=0$, $\mu_c(Q)\le \deg Q\le g+1$. Thus
$\lambda_c(Q, P)\le 2g+2$. In the case (2.b.ii),
$v(Q_0)=0$, and $\mu_c(Q_0)\le \deg Q_0\le g+1$ because $\deg H_0\le
g+1$ by construction.  Hence $\lambda_c(Q_0, P_0)\le 2g+2$. 

(3.2) Suppose now that $v(Q)>0$. If $v(P)=0$, then $\bar{P}(x)\notin
k[x^2]$ by hypothesis and $\mu_c(P)\le \deg P\le
2g+2$. Moreover, in the case  (2.b.ii), 
$$P_0=P+QH_0-H_0^2\equiv P-H_0^2 \not\equiv \square  \mod \pi,$$ 
and $\mu_c(P_0)\le 2g+2$ as well. 

(3.3) Suppose that $v(Q)>0$ and $v(P)=1$. Then $\mu_c(P)\le 1+
\deg P\le 2g+3$. In the case (2.b.ii), $v(H_0)>0$ because $v(a_{i})=0$ 
for all $i$. So $Q_0-Q=2H_0\equiv 0 \mod 2\pi$ and $v(P_0-P)\ge
2$. Thus $v(P_0)=1$ and $\mu_c(P_0)\le \deg P_0+1\le 2g+3$.
\smallskip

To prove (3.d), if $\lambda(p_0)=\lambda_c(Q_0, P_0)$ then we are done. Otherwise, we
repeat the same operations with $Q_0, P_0$. As the $\lambda_c$ 
increases strictly, we will end-up with a pair having $\lambda_c$
equal to $\lambda(p_0)$. 
\end{proof}

\begin{lemma}\label{mup4} Keep the assumptions and notation of
  Lemma~\ref{mup} and suppose that $\lambda(p_0)=\lambda_c(Q, P)$. Let
  $r=[\lambda(p_0)/2]$, $x_1=\pi^{-1}(x-c)$, and 
  $$
Q_1(x_1)=\pi^{-r}Q(\pi x_1+c), \quad P_1(x_1)=\pi^{-2r}P(\pi x_1+c) \in A[x_1]. 
$$
Then the equation
  \begin{equation}\label{eq:Wp0}
z^2+ Q_1(x_1)z=P_1(x_1)     
  \end{equation}
($z=y/\pi^r$) 
defines a Weierstrass model $W(p_0)$ of $C$, normal at $\p$ with the
pair $(Q_1, P_1)$ satisfying Lemma~\ref{normalize_at_2}(1). 
\end{lemma}

\begin{proof} 
We have $v(Q_1)=\mu_c(Q)-r$ and $v(P_1)=\mu_c(P)-2r$. Let us
distinguish three cases.

(a) If $\lambda(p_0)=2r=2\mu_c(Q)$, then $v(Q_1)=0$; 

(b) If $\lambda(p_0)=2r=\mu_c(P)< 2\mu_c(Q)$, then by (2.b), there exists an odd index $i_0$ such
that $v(a_{i_0})+i_0=\mu_c(P)$. This implies that $\bar{P}_1(x)$ has
a non-zero odd degree term. In particular $\bar{P}_1(x)\notin k[x^2]$;

(c) If $\lambda(p_0)=2r+1$. Then $\mu_c(Q)>r$, $\mu_c(P)=2r+1$ and 
$v(Q_1)>0$, $v(P_1)=1$.

So the pair $(Q_1, P_1)$  satisfies  Lemma~\ref{normalize_at_2}(1) and $W(p_0)$ is normal.
\end{proof}

\end{section}

\begin{section}{Minimality criterion}\label{min-crit}

We fix a prime $\p$ of $A$. 
We will assume that $W$, defined by Equation~\eqref{eq:start}, is
normal at $\p$. Moreover, if $\p$ is even, we suppose that the pair
$(Q, P)$ satisfies Lemma~\ref{normalize_at_2}(1). 
\medskip

\begin{notation} \label{epsilon}
We let $\epsilon(W)=0$ if $W_k$ is a reduced scheme,
and $\epsilon(W)=1$ otherwise.
If necessary it will be denoted by $\epsilon_\p(W)$. 
\end{notation}

Under the above conditions, we have $\epsilon(W)=\min\{ v(Q), v(P)\}$
if $\p$ is even, and $\epsilon(W)=v(F)-2[v(F)/2]$ if $\p$ is odd and
$F=4P+Q^2$.

\subsection{Dilatation}\label{dilat} Let $p_0\in W(k)$. Let $W(p_0)$ be the model
defined by Equation~\eqref{eq:Wp0}). See also  a 
more geometrical description in \cite{LTR}, D\'efinition 12, p. 4592. 
\medskip 

\begin{remark} \label{Wp0-P} Let $p_0\in W(k)$. 
  \begin{enumerate}[{\rm (a)}] 
  \item The birational map $W(p_0) \dasharrow W$ is an 
isomorphism at all primes different from $\p$. 
\item Denote by $\Delta_{W}$ the discriminant of Equation~\eqref{eq:start} 
and $\Delta_{W(p_0)}$ that Equation~\eqref{eq:Wp0}. Then 
\begin{equation}
  \label{eq:dp0}
v(\Delta_{W(p_0)})-v(\Delta_W)= 2(2g+1)(g+1-2[\lambda(p_0)/2])
\end{equation}
(\cite{LTR}, Lemme 9(a), p. 4593).
\item We have
  $$\epsilon(W(p_0))=\lambda(p_0)-2[\lambda(p_0)/2].$$ 
  (If $\p$ is even, this is contained in the proof of Lemma~\ref{mup4}).
\end{enumerate}
\end{remark}

\subsection{Minimality criterion} 

The minimality of $W$ at $\p$ can be determined by looking at the
multiplicity $\lambda(p_0)$ at rational points of $W(k)$. 

\begin{proposition}[Minimality criterion]\label{min-cond} Let $\p$ be
a prime of $A$. Suppose that $W$, defined by  
  $$ y^2+Q(x)y=P(x),$$ 
is normal at $\p$.  
\begin{enumerate}[\rm (1)] 
\item If for all $p_0\in W(k)$ we have $\lambda(p_0)\le g+1$, then
    $W$ is minimal at $\p$. The converse is true if $g$ is even. 
    \footnote{Let $g\ge 1$ be odd and let $p>2$.
Consider the equation $y^2=px^{2g+1}+p^{g+2}$ over $\Z_p$. Then $\epsilon=1$. For the point $x=y=p=0$, 
we have $\lambda=g+2$. By (2.b), this equation is minimal. But
$\lambda>g+1$. So in (1) the converse does not hold for odd $g$ in general.}
\item Suppose $g$ is odd and there exists $p_0\in W(k)$ with 
    $\lambda(p_0)\ge g +2$.
    \begin{enumerate}[{\rm (a)}]
    \item If $\lambda(p_0)\ge g+3$, 
      then $W$ is not minimal at $\p$. More precisely,
      $v(\Delta_{W(p_0)})<v(\Delta_W)$.
    \item If $\lambda(p_0)=g+2$ and $\epsilon(W)=1$, then $W$
      is minimal at $\p$.
      \item If $\lambda(p_0)=g+2$ and $\epsilon(W)=0$, then $W$
      has the same discriminant as $W(p_0)$ with
      $\epsilon(W(p_0))=1$. The
model $W(p_0)$ (hence $W$) is minimal 
      at $\p$ if and only if for all $q\in W(p_0)(k)$, we have $\lambda(q)\le g+2$.       
    \end{enumerate}
\end{enumerate}
\end{proposition}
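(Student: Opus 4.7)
The proof proceeds from the discriminant change formula in Remark~\ref{Wp0-P}(2),
\[
v(\Delta_{W(p_0)})-v(\Delta_W)=2(2g+1)\bigl(g+1-2[\lambda(p_0)/2]\bigr),
\]
together with the structural input from \cite{LTR} that dilatations $W\dasharrow W(p_0)$ exhaust the non-trivial local minimization moves: any normal Weierstrass model equivalent to $W$ at $\p$ with strictly smaller discriminant is reached from $W$ by a finite chain of such dilatations. Consequently $W$ is minimal at $\p$ iff no finite chain of dilatations starting at $W$ strictly decreases $v(\Delta)$.

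I would start with the elementary parity analysis of the right-hand side. By Remark~\ref{Wp0-P}(3) the quantity $2[\lambda(p_0)/2]=\lambda(p_0)-\epsilon(W(p_0))$ is an even non-negative integer, so $v(\Delta_{W(p_0)})<v(\Delta_W)$ iff $2[\lambda(p_0)/2]>g+1$. Comparing with the parity of $g+1$, this is equivalent to $\lambda(p_0)\ge g+2$ when $g$ is even and to $\lambda(p_0)\ge g+3$ when $g$ is odd. This delivers (2a) at once, as well as the sufficiency direction of (1) (no single dilatation then decreases the discriminant, hence by completeness no chain does); the converse of (1) for $g$ even is also immediate, since a point with $\lambda(p_0)\ge g+2$ already exhibits a strictly smaller-discriminant model $W(p_0)$.

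Cases (2b) and (2c) concern $g$ odd with $\lambda(p_0)=g+2$. Here the formula gives $v(\Delta_{W(p_0)})=v(\Delta_W)$ (since $2[\lambda/2]=g+1$) and $\epsilon(W(p_0))=(g+2)-(g+1)=1$. For (2c), with $\epsilon(W)=0$: since $W$ and $W(p_0)$ share the same discriminant, $W$ is minimal at $\p$ iff $W(p_0)$ is; and since $\epsilon(W(p_0))=1$, applying the criterion to $W(p_0)$ reduces its minimality to excluding case (2a) on $W(p_0)_0(k)$, i.e.\ to $\lambda(q)\le g+2$ for every such $q$. For (2b), with $\epsilon(W)=1$: the proposition is to be read algorithmically, so case (b) is triggered only after case (a) has been ruled out on $W$, i.e.\ $\lambda(q)\le g+2$ for every $q\in W(k)$; the formula then yields $v(\Delta_{W(q)})\ge v(\Delta_W)$ for every single dilatation, and minimality of $W$ follows by the completeness statement.

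The main obstacle is not the arithmetic of the formula but the completeness claim itself: that checking $\lambda$-values at the finitely many $k$-rational points of $W(k)$ suffices to certify, or refute, minimality at $\p$. This requires the geometric dilatation theory of \cite{LTR}. A secondary subtlety occurs in (2b): one must argue that iterating dilatations from a model with $\epsilon=1$ and all local multiplicities at most $g+2$ does not later open up a dilatation that strictly decreases $v(\Delta)$; the completeness argument of \cite{LTR} addresses exactly this by packaging all such chains into one coherent criterion, after which the formula does the rest.
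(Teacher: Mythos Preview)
Your overall strategy matches the paper's: both derive (2.a) and the first assertion of (2.c) directly from the discriminant formula of Remark~\ref{Wp0-P}(2), and both defer the structural content---(1), (2.b)---to \cite{LTR}. The paper, however, cites specific results (Corollaire~2 and Lemme~9(c) there) rather than an informal ``completeness'' principle, and this difference matters.

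There is a concrete gap in your treatment of (2.c). You correctly observe that $v(\Delta_{W(p_0)})=v(\Delta_W)$ and $\epsilon(W(p_0))=1$, and then reduce minimality of $W(p_0)$ to the condition $\lambda(q)\le g+2$ for all $q\in W(p_0)_0(k)$. But invoking (1) and (2.b) for the model $W(p_0)$ requires this bound at \emph{every} $q\in W(p_0)(k)$, including the pole of $x_1$; the restriction to the affine chart $W(p_0)_0$ in the statement of (2.c) is not automatic. The paper closes this by appealing to \cite{LTR}, Lemme~9(b): after a dilatation the pole of the new coordinate always has multiplicity $\lambda=g+1$, so it never violates the bound. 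You need exactly this input and have not supplied it.

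There is also a logical slip in your argument for the sufficiency direction of (1). From ``no single dilatation decreases $v(\Delta)$'' you infer ``hence by completeness no chain does'', but your completeness principle only says that smaller-discriminant models are \emph{reached} by chains---it does not preclude a chain whose first step preserves $v(\Delta)$ (this occurs for $g$ odd when $\lambda(p_0)=g+1$) and whose later step decreases it. Case (2.c) itself exhibits precisely such two-step behaviour. The paper sidesteps this by citing the actual minimality criterion of \cite{LTR}, Corollaire~2, rather than a chain argument. Your ``algorithmic reading'' of (2.b) suffers from the same defect; one rigorous alternative is to use Remark~\ref{big-l}, which shows that for $g$ odd at most one point of $W(k)$ can have $\lambda\ge g+2$.
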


\begin{proof} (1) and (2.b) follow from \cite{LTR}, 
  Corollaire 2, p. 4594 and Lemme 9(c), p. 4593.
 (2.a) and the  first part of (2.c) follow from the equality~\eqref{eq:dp0}.
 To finish the proof of (2.c), as $\lambda(p_0)=g+2$ is odd,
 we have $\epsilon(W(p_0))=1$ by
 Remark~\ref{Wp0-P}(3). The pole of $x$ in $W(p_0)$ has 
multiplicity $\lambda=g+1$ by {\it op.\ cit.}, Lemme 9(b), p. 4593. This
finishes the proof by (1) and (2.b). 
\end{proof}

\begin{remark}\label{bms}   
  Let us say that a multiplicty $\lambda$ is \emph{small} if $\lambda\le g+1$, or if $g$
    is odd and $\lambda=g+2$ with $\epsilon =1$ (Conditions (1) or
    (2.b) in Proposition~\ref{min-cond}). We say it is \emph{medium} 
    if $g$ is odd, $\lambda=g+2$ with $\epsilon =0$ (Condition (2.c)).
    Otherwise we say
    it is \emph{big}: $\lambda\ge g+2$ and $g$ is even or $\lambda\ge g+3$ and $g$ is odd.
    \begin{enumerate}[{\rm (1)}]
    \item 
    Proposition~\ref{min-cond} then can be rephrased as following:
    \begin{enumerate}[{\rm (i)}]
    \item     If all rational points of $W(k)$ have small
      multiplicities, then $W$ is minimal; 
    \item if there is a rational point $p_0\in W(k)$ with big
      multiplicity, then $W$ is not minimal and
      $v(\Delta_{W(p_0)})<v(\Delta_W)$; 
    \item if there is a rational point $p_0\in W(k)$ with medium multiplicity, then we work with $W(p_0)$ whose discriminant has the same valuation as $W$. But $W(p_0)$ has no 
    rational point with medium multiplicity because $\varepsilon(W(p_0))=1$. 
  \end{enumerate}
\item During the minimization process (\ref{algo-even} and \ref{algo-odd}), once we encounter a rational point  $p_0$ with big or medium multiplicity, we work with $W(p_0)$ defined
  by Equation~\eqref{eq:Wp0} in Lemma~\ref{mup4}. The points
  at $\infty$ in $W(p_0)$ corresponding to $x_1=\infty$ has
  small multiplicities ($\lambda\le g+1$). This follows from \cite{LTR},
  Lemme 9.(b), page 4593 by case-by-cas analysis. These points at 
  infinity are denoted by $p_0'$ in {\it loc. cit.} 
Therefore in the next loops of the algorithm we do not have to
deal with the points at infinity.
\item \label{big-l} 
Using \cite{LTR}, Lemme 7(f), pages 4589-4590,  one can
show  that if there is more than one point in
$W(k)$ with big or medium multiplicities, then there are exactly $2$
such points $p_0, p_1$. Moreover, 
$g$ must be even, $\epsilon(W)=1$, $\lambda(p_i)=g+2$ and the
$W(p_i)$'s are then minimal at $\p$. 
\end{enumerate}
\end{remark} 
\medskip

The minimality criterion ~\ref{min-cond} needs {\it a priori} to compute the
multiplicity $\lambda$ for all points in $W(k)$. The next lemma
explains that it is only necessary to do it  for at most 2 points
of $W(k)$ and how to find them.  

\begin{lemma} \label{l-d} Keep the notation of the above proposition.
Suppose further that when $\p$ is even, $(Q, P)$ satisfies 
Lemma~\ref{normalize_at_2}(1). 
Let $p_0\in W_0(k)$ be such that $\lambda(p_0)\ge g+2$ and denote by 
$\epsilon=\epsilon(W)$. Let $\bar{c}=x(p_0)$. 
  \begin{enumerate}[{\rm (1)}]
\item\label{l-d-odd} If $p\ne 2$, then 
  $$\ord_{\bar{c}}(\overline{\pi^{-\epsilon}(4P+Q^2)})\ge g+2-\epsilon.$$
\item \label{l-d-even} Suppose $p=2$. 
  \begin{enumerate}[{\rm (i)}]
  \item  If $\bar{Q}\ne 0$,    then 
  $$\ord_{\bar{c}}(\overline{Q}) \ge (g+2)/2. $$
\item If $\bar{Q}=0$ and $\bar{P}\notin k[x^2]$, then 
  $$\ord_{\bar{c}}(\overline{P'}) \ge g+1,$$
  where $P'$ is the derivative of $P(x)$. 
 \item If $\epsilon=1$, then 
 $$\ord_{\bar{c}}(\overline{\pi^{-1}Q})\ge g/2,
    \quad \ord_{\bar{c}}(\overline{\pi^{-1}P})\ge
    g+1.$$
\end{enumerate}
\end{enumerate}
\end{lemma}

\begin{proof} (1) follows from Inequality~\eqref{eq:dl}.
  \smallskip
  
  (2) Suppose $\lambda_c(Q,P)<\lambda(p_0)$. Then we are in the the case (2.b.ii) of Lemma~\ref{mup}.  
Let $(Q_0, P_0)$ be the pair given by Lemma~\ref{mup}(3). Then the
properties (3.a)-(3.b) there imply immediately that in the computations
of the vanishing orders we can replace $(Q, P)$ by $(Q_0, P_0)$.
Therefore we can suppose $\lambda_c(Q, P)=\lambda(p_0)$. 
Again by Inequality~\eqref{eq:dl} 
$$2\ord_{\bar{c}}(\bar{Q})\ge 2\mu_c(Q)\ge g+2,
\quad \ord_{\bar{c}}(\bar{P})\ge \mu_c(P)\ge g+2.$$
This proves (i) and (ii). When $\epsilon=1$, the same proof works by noting that
$\mu_c(F)=\mu_c(\pi F)-1$ for any $F\in A[x]$. 
\end{proof}

The next proposition is just a more explicit transcription of
\cite{LTR}, Corollaire 2(b), p. 4594, plus Lemme 9(c), p. 4593 for (2.a)). The parenthetical sentence in (2.b) below
follows from {\it op.\ cit.}, Lemme 7(f), p. 4595-4596.
These results rely on the invariant denoted by
    $\lambda'(p_0)$ which is the maximum of the $\lambda(p)$'s for all
  rational points $p\in W(p_0)(k)$ except the points at infinity
  ($x_1=\infty$ in the notation of Lemma~\ref{mup4}).

\begin{proposition}[Uniqueness criterion] \label{min-uni} 
 The following properties are true. 
  \begin{enumerate}[\rm (1)]
  \item If for all $p_0\in W(k)$ we have $\lambda(p_0)\le g$, 
    then $W$ is the unique minimal model at $\p$.
  The converse is true if $g$ is odd. 
\item Suppose $g$ is even and there exists $p_0\in W(k)$ with
    $\lambda(p_0)=g+1$. 
 \begin{enumerate}[{\rm (a)}]
 \item If $\epsilon(W)=1$, {\bf and if $W$ is minimal\footnote{this
       hypothesis was omitted in the published version.}}, then $W$ is the unique minimal model at $\p$.
 \item Suppose $\epsilon(W)=0$, then $W$ is the unique minimal model at $\p$
   if and only if for all $p_0\in W(k)$ with $\lambda(p_0)=g+1$
   (there are at most two such points), we have $\lambda(q)\le g+1$
   for all $q\in W(p_0)(k)$. 
\end{enumerate}
\end{enumerate}
\end{proposition}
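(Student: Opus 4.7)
The plan is to combine the discriminant formula of Remark~\ref{Wp0-P}(2) with the minimality criterion (Proposition~\ref{min-cond}) and the structural fact from \cite{LTR}, Corollaire~2(b), that any minimal Weierstrass model of $C$ at $\p$ is reached from $W$ by a finite chain of dilatations
$$ W \dasharrow W(p_0) \dasharrow W(p_0)(q) \dasharrow \cdots $$
at rational points of the successive Weierstrass models. In this framework, $W$ is the unique minimal model at $\p$ precisely when no such chain produces a model whose discriminant has the same valuation as that of $W$.

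A single dilatation $W \dasharrow W(p_0)$ changes the valuation of the discriminant by $2(2g+1)(g+1-2[\lambda(p_0)/2])$, which vanishes exactly when $g+1=2[\lambda(p_0)/2]$; that is, only when $g$ is odd and $\lambda(p_0)\in\{g+1,g+2\}$. For part (1), the hypothesis $\lambda(p_0)\le g$ at every $p_0$ forces $2[\lambda(p_0)/2]\le g$, so the first dilatation strictly raises $v(\Delta)$; iterating the bound at $W(p_0)$ (which by Lemma~\ref{mup}(4) satisfies the normalization hypotheses) shows every chain strictly raises $v(\Delta)$, giving uniqueness. Conversely, suppose $g$ is odd and some $p_0$ satisfies $\lambda(p_0)\ge g+1$: minimality of $W$ forces $\lambda(p_0)\in\{g+1,g+2\}$ by Proposition~\ref{min-cond}(2.a), and then $W(p_0)$ is a distinct minimal model with the same discriminant valuation as $W$, contradicting uniqueness.

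For part (2), $g$ is even and $g+1$ is odd, so one dilatation can never preserve $v(\Delta)$; hence an alternative minimal model can only arise through a chain of length at least two. Starting from $p_0$ with $\lambda(p_0)=g+1$, the first step raises $v(\Delta)$ by exactly $2(2g+1)$, and cancellation requires a further dilatation from $W(p_0)$ at some $q$ with $\lambda(q)\in\{g+2,g+3\}$; by Proposition~\ref{min-cond} applied to $W(p_0)$, only $\lambda(q)=g+2$ can produce a minimal $W(p_0)(q)$. In case (2.a), where $\epsilon(W)=1$, I would invoke \cite{LTR}, Lemme~7(f), to rule out such a $q\in W(p_0)_0(k)$: the non-reducedness of $W_k$ constrains the multiplicities that can appear after dilatation (note that $\epsilon(W(p_0))=1$ as well, by Remark~\ref{Wp0-P}(3) since $\lambda(p_0)=g+1$ is odd). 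In case (2.b), where $\epsilon(W)=0$, the stated if-and-only-if is precisely the assertion that no such $q$ exists. The parenthetical bound of at most two points $p_0$ with $\lambda(p_0)=g+1$ is a degree-counting consequence of the root-order inequalities in Lemma~\ref{l-d} adapted with threshold $g+1$.

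The main obstacle is case (2.a): precisely justifying that $\epsilon(W)=1$ precludes any $q\in W(p_0)_0(k)$ from having $\lambda(q)=g+2$. This is exactly the content of \cite{LTR}, Lemme~7(f), whose proof tracks how reducedness of the special fiber interacts with the arithmetic of closed-point multiplicities across two successive dilatations when the initial special fiber is non-reduced; here one simply cites it to close the argument.
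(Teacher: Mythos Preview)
Your overall framework---chains of dilatations plus the discriminant formula of Remark~\ref{Wp0-P}(2)---is the right one, and aligns with the paper's treatment, which simply records that the proposition is a transcription of \cite{LTR}, Corollaire~2(b), together with Lemme~9(c) for (2.a), and Lemme~7(f) for the parenthetical bound in (2.b). Two points deserve correction.

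First, you have swapped the two \cite{LTR} lemma citations. The paper invokes Lemme~9(c) (not 7(f)) for case (2.a); this is the same lemma underlying Proposition~\ref{min-cond}(2.b), where $\epsilon(W)=1$ forces minimality. Conversely, the parenthetical ``at most two such points'' in (2.b) is attributed to Lemme~7(f), not to an adaptation of Lemma~\ref{l-d} (your degree-counting heuristic is in the right spirit, but it is not what the paper cites).

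Second, your argument for part (1) has a genuine gap. You write that ``iterating the bound at $W(p_0)$ \ldots\ shows every chain strictly raises $v(\Delta)$'', but the hypothesis $\lambda\le g$ is only assumed at points of $W(k)$, not at points of $W(p_0)(k)$; nothing you have established prevents a later dilatation in the chain from lowering $v(\Delta)$ back to $v(\Delta_W)$. What makes a one-step analysis sufficient is the precise content of \cite{LTR}, Corollaire~2(b): it does not merely assert the existence of a connecting chain of dilatations, it characterises uniqueness directly in terms of the multiplicities $\lambda(p_0)$ on $W$ itself, so no iteration is required. The same remark applies to your two-step analysis in part~(2): you need the full strength of that corollaire to know that examining $p_0$ and then $q\in W(p_0)_0(k)$ exhausts the possible routes to another minimal model.
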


The next lemma allows to construct a global equation from local
equations at odd primes and even primes. 

\begin{lemma}[Combining local equations] \label{odd-2-even} Suppose $\mathrm{char}(K)=0$. Let
\begin{equation}\label{eq:start2} 
y^2+Q(x)y=P(x)
\end{equation} 
be an equation of $C$ over $A$ with $\deg Q\le g+1$ and $\deg P\le
2g+2$, and let    
\begin{equation}\label{eq:odd} 
z^2=F_1(x_1)
\end{equation} 
be another equation of $C$ over $A$ obtained by the change of variables
$$x=\frac{ax_1+b}{cx_1+d}, \quad 2y+Q(x)=\frac{ez}{(cx_1+d)^{g+1}}$$
with coefficients in $A$ and such that $e, ad-bc$ are odd.
Let $m\in A$ be such that
$$ 4m \equiv 1 \mod e $$ 
(take $m=0$ if $e\in A^*$). Let
$$ y=\frac{ey_1}{(cx_1+d)^{g+1}}, $$
$$ Q_1(x_1)=e^{-1}(1-4m)(cx_1+d)^{g+1}Q(x) $$
and
$$ P_1(x_1)=e^{-2}(cx_1+d)^{2g+2}(P(x)+(2m-4m^2)Q(x)^2). $$
Then 
\begin{equation}
  \label{eq:new}
  y_1^2+Q_1(x_1)y_1=P_1(x_1) 
\end{equation} 
is an equation of $C$ over $A$, equivalent 
to Equation~\eqref{eq:start2}  (Definition~\ref{sb-W})
at even primes and to Equation~\eqref{eq:odd} at odd primes.  
\end{lemma}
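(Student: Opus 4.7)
My plan is to isolate one algebraic identity that explains all three assertions at once, then address integrality of $Q_1,P_1$ and the two equivalences in turn. I would begin by introducing the shorthand $N=N(x_1):=(cx_1+d)^{g+1}$, $\tilde Q:=e^{-1}N\,Q(x)$, and $\tilde P:=e^{-2}N^2 P(x)$, with $x=(ax_1+b)/(cx_1+d)$ understood throughout. Substituting $y=ez'/N$ directly into \eqref{eq:start2} and clearing $N^2/e^2$ gives
\[
(z')^2 + \tilde Q\,z' = \tilde P,
\]
which is already an equation for $C$ in the variables $(x_1,z')$. The key identity is then
\[
(y_1-2m\tilde Q)^2 + \tilde Q(y_1-2m\tilde Q) - \tilde P
 = y_1^2 + (1-4m)\tilde Q\,y_1 + (4m^2-2m)\tilde Q^2 - \tilde P,
\]
which, with $Q_1=(1-4m)\tilde Q$ and $P_1=\tilde P+(2m-4m^2)\tilde Q^2$, shows that $y_1^2+Q_1y_1=P_1$ is simply the pullback of $(z')^2+\tilde Q\,z'=\tilde P$ under $z'=y_1-2m\tilde Q$; in particular \eqref{eq:new} cuts out $C$.

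Next I would check $Q_1,P_1\in A[x_1]$. Writing $4m-1=ef$ with $f\in A$, we get $Q_1=-f\,N\,Q(x)$, and $NQ(x)\in A[x_1]$ because $\deg Q\le g+1$ and $Q,a,b,c,d$ have coefficients in $A$; hence $Q_1\in A[x_1]$. For $P_1$, the numerical identity $(1-4m)^2+4(2m-4m^2)=1$ yields
\[
4P_1+Q_1^2 \;=\; e^{-2}N^2(4P+Q^2) \;=\; F_1 \;\in\; A[x_1],
\]
so $4P_1\in A[x_1]$; separately, $e^2 P_1 = N^2\bigl(P+(2m-4m^2)Q^2\bigr)\in A[x_1]$ is clear from the degree bounds. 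Since $e$ is odd, $4$ and $e^2$ share no prime factor in $A$, so a prime-by-prime check (at odd $\p$ one has $v_\p(4)=0$, at even $\p$ one has $v_\p(e^2)=0$) gives $v_\p(P_1)\ge 0$ for every $\p$, whence $P_1\in A[x_1]$.

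The two equivalence statements then follow locally. At an even prime $\p$, the hypothesis that $e$ and $ad-bc$ are odd makes them units in $A_\p$, so $(x,y)\mapsto(x_1,z')$ defined by $x=(ax_1+b)/(cx_1+d)$ and $y=ez'/N$ is an $A_\p$-isomorphism of Weierstrass models, showing that \eqref{eq:start2} is equivalent at $\p$ to $(z')^2+\tilde Q\,z'=\tilde P$; the translation $z'\mapsto z'+2m\tilde Q$, which lies in $A_\p[x_1]$ since $e\in A_\p^*$, then takes this to \eqref{eq:new}. At an odd prime $\p$, $2\in A_\p^*$ and the $A_\p$-isomorphism $z=2y_1+Q_1$ sends \eqref{eq:new} to $z^2=4P_1+Q_1^2=F_1$, i.e.\ \eqref{eq:odd}. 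The only non-routine step is the integrality of $P_1$: one must exploit the congruence $4m\equiv 1\pmod{e}$ precisely to force $4P_1+Q_1^2=F_1\in A[x_1]$, and then invoke the oddness of $e$ to combine this with $e^2P_1\in A[x_1]$; everything else is bookkeeping around the single identity displayed above.
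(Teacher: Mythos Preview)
Your argument is correct and follows essentially the same route as the paper: both hinge on the identity $4P_1+Q_1^2=F_1$ to get $4P_1\in A[x_1]$, combine this with $P_1\in A_e[x_1]$ (equivalently your $e^2P_1\in A[x_1]$) and the oddness of $e$ to conclude $P_1\in A[x_1]$, and then read off the two local equivalences from the units available at even versus odd primes. Your version simply unpacks the algebra more explicitly—introducing the intermediate model $(z')^2+\tilde Q\,z'=\tilde P$ and the translation $z'=y_1-2m\tilde Q$—where the paper leaves these computations implicit.
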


\begin{proof}   It is clear that $Q_1(x_1)\in A[x_1]$. We have 
  $$4P_1(x_1)+Q_1(x_1)^2=F_1(x_1),$$
  so $4P_1(x_1)\in A[x_1]$. But by
construction $P_1(x_1)\in A_e[x_1]$,
where $A_e$ is the localization of $A$ with respect to
      the positive powers of $e$, thus $P_1(x_1)\in A[x_1]$.
Moreover the above relation implies that Equation~\eqref{eq:new} is
equivalent to Equation~\eqref{eq:odd} at odd primes. As $e$ and
$ad-bc$ are odd, hence invertible at even primes, 
Equation~\eqref{eq:new} is equivalent to Equation~\eqref{eq:start2} at
even primes. 
\end{proof}

\end{section}

\begin{section}{Pointed minimal Weierstrass  equations} \label{pointed-min} 

  Suppose that $C$ has a rational Weierstrass point $w_0$.
A \emph{pointed Weierstrass equation} of $(C, w_0)$ over $A$ is an equation
$$
y^2+Q(x)y=P(x) 
$$ 
over $A$ with $\deg Q\le g$ and $P(x)$ monic of $\deg P=2g+1$, and $w_0$ is the pole
of $x$. By suitably scaling an initial equation of $C$ over $K$ with
$x$ having its pole at $w_0$, one can always obtain such an equation. The
associated model is automatically normal with reduced fiber at all primes $\p$ 
(see \S~\ref{normalization}).

\emph{Minimal pointed 
Weierstrass equations of $C$} are defined in a similary way to the
non-pointed case. 
They are studied in \cite{Loc}. 
For a given $(C, w_0)$, the minimal pointed Weierstrass equation 
exists and is unique up to the transformations described in Lemma~\ref{p-c}
below. See also \cite{Loc} and \cite{LGE}, Corollary 5.2.
The next lemma is stated in \cite{Loc}, Remark after Definition 2.1.

\begin{lemma}\label{p-c} Fix a prime $\p$ of $A$. Let
$R=A_\p$.  Let
$$ y_1^2+Q_1(x_1)y_1=P_1(x_1) $$
be another pointed Weierstrass equation of $(C, w_0)$ with
discriminant $\Delta_1$ such that $v_\p(\Delta_1)\le v_\p(\Delta)$. Then
there exist $u, c\in R$, $H(x)\in R[x]$ of degree $\le g$ such that 
$$x=u^2 x_1 + c,  \ y=u^{2g+1}y_1+H(x),$$
and we have $\Delta_1=u^{-4g(2g+1)}\Delta$. 
\end{lemma}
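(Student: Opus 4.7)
The plan is first to identify the change of variables $(x,y)\leftrightarrow(x_1,y_1)$ over the function field $L=K(C)$ using linear systems at $w_0$, and then to derive integrality from the discriminant inequality together with $Q_1,P_1\in R[x_1]$.

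Since $w_0$ is a hyperelliptic Weierstrass point, its Weierstrass gap sequence is $\{1,3,\ldots,2g-1\}$, so $H^0(C_K,\mathcal{O}(2w_0))$ has $K$-basis $\{1,x\}$ and $H^0(C_K,\mathcal{O}((2g+1)w_0))$ has $K$-basis $\{1,x,\ldots,x^g,y\}$. The elements $x_1$ and $y_1$ lie in these spaces, so
\[
x_1=\alpha x+\beta,\qquad y_1=\gamma y+H_0(x)
\]
for some $\alpha,\gamma\in K^*$, $\beta\in K$, and $H_0\in K[x]$ of degree $\le g$. Substituting $x=\alpha^{-1}(x_1-\beta)$ and $y=\gamma^{-1}(y_1-H_0(x))$ into $y^2+Qy=P$ yields
\[
y_1^2+(\gamma Q-2H_0)y_1=\gamma^2P+\gamma QH_0-H_0^2,
\]
whose right-hand side must be monic of degree $2g+1$ in $x_1$. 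Matching the leading coefficient $\gamma^2\alpha^{-(2g+1)}$ to $1$ gives $\gamma^2=\alpha^{2g+1}$. Setting $u:=\alpha^g/\gamma\in K^*$, one has $u^2=\alpha^{-1}$ and $u^{2g+1}=\gamma^{-1}$; with $c:=-\alpha^{-1}\beta$ and $H(x):=-\gamma^{-1}H_0(x)$, the transformation takes the required shape $x=u^2x_1+c$, $y=u^{2g+1}y_1+H(x)$.

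The discriminant transforms under this change of variables by the standard formula $\Delta_1=u^{-4g(2g+1)}\Delta$, verified by computing $4P_1+Q_1^2=u^{-(4g+2)}(4P+Q^2)(u^2x_1+c)$ and tracking how the discriminant of a degree $2g+1$ polynomial rescales under an affine change of variable together with a scalar multiplication. The hypothesis $v_\p(\Delta_1)\le v_\p(\Delta)$ immediately forces $v_\p(u)\ge 0$, so $u\in R$.

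Finally, to obtain $c\in R$ and $H\in R[x]$, unpack
\[
Q_1(x_1)=u^{-(2g+1)}(Q+2H)(u^2x_1+c),\qquad P_1(x_1)=u^{-(4g+2)}(P-QH-H^2)(u^2x_1+c),
\]
and use that both polynomials lie in $R[x_1]$. Comparing coefficients of $x_1^i$ from the top down: the leading coefficient of $Q_1$ gives an $R$-linear constraint on $h_g$; the coefficient of $x_1^{2g}$ in $P_1$ then constrains $c$ in terms of $h_g$; proceeding downward and alternating between the two constraints yields $h_g,c,h_{g-1},\ldots,h_0\in R$ in turn. The main obstacle is this last step when the residue characteristic $p$ divides $2$ or $2g+1$: the individual $R$-linear relations become degenerate, and one then has to combine the $Q_1$- and $P_1$-constraints simultaneously to solve the resulting $R$-linear system inside $R$ (alternatively invoking the corresponding result from \cite{Loc}).
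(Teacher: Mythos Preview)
Your first two paragraphs are fine: the Riemann--Roch argument pinning down the shape of the change of variables over $K$ is standard (and is what \cite{Loc} does), and deducing $u\in R$ from the discriminant formula is immediate.

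The gap is the last paragraph. The top-down coefficient comparison does \emph{not} close when $p=2$ or $p\mid 2g+1$, and you acknowledge this yourself. For instance at $p=2$ the degree-$g$ coefficient of $Q_1$ gives only $q_g+2h_g\in uR$, which says nothing about $h_g$; the degree-$2g$ coefficient of $P_1$ then mixes $c$ and $h_g$ in a single congruence, and descending further only entangles more unknowns. There is no reason the resulting system is solvable in $R$ by elementary linear algebra, and your parenthetical ``alternatively invoking the corresponding result from \cite{Loc}'' is circular: Lockhart explicitly left this integrality to the reader, and the whole point of the paper's proof is to fill that gap.

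The paper's argument is quite different and avoids coefficient-chasing entirely. For $c\in R$ it argues by contradiction: if $c\notin R$ then $c^{-1}\in\pi R$, and rewriting the change of variables as
\[
\frac{1}{x}-c^{-1}=-(c^{-1}u)^2\,\frac{x_1}{(c^{-1}u^2)x_1+1}
\]
one applies \cite{LGE}, Lemma~5.1 (using that the pole of $x$ in $W_k$ is a \emph{smooth} point, since $P$ is monic of degree $2g+1$) to force $v(\Delta)<v(\Delta_1)$, contradicting the hypothesis. Once $c\in R$, one reduces to $c=0$, and then
\[
P(x)+Q(x)H(x)-H(x)^2=u^{2(2g+1)}P_1(x/u^2)\in R[x];
\]
if the Gauss valuation of $H$ were negative, the left side would have Gauss valuation $2v(H)<0$, a contradiction. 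This one-line valuation argument replaces your entire inductive scheme and works uniformly in all residue characteristics.
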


\begin{proof} We only have to complete the proof of the
integrality statement: $c\in R, H(x)\in R[x]$ 
(left to the reader in \cite{Loc}).  Suppose that $c\notin R$. Then
$c^{-1}\in \pi R$. As 
$$
\frac{1}{x}-c^{-1}=-(c^{-1}u)^2 x_1', \quad \text{where }
x_1'=\frac{x_1}{(c^{-1}u^2)x_1+1}, 
$$
by \cite{LGE}, Lemma 5.1, we have $v(\Delta)< v(\Delta_1)$ (with the notation
of {\it op.\ cit.}, $d=2v(c^{-1}u)>0$ and the point $p$ is the pole of $x$
in $W_k$ which is a smooth point). So $c\in R$.

It remains to prove that $H(x)\in R[x]$ and $\deg H\le g$. Without loss of
generality we can suppose that $c=0$. We then have
$$ P(x)+Q(x)H(x)+H(x)^2=u^{2(2g+1)}P_1(x_1)=u^{2(2g+1)}P_1(x/u^2)\in R[x].$$
This implies that $H(x)\in R[x]$ using the Gauss valuation on $K(x)$
associated to $v_\p$. Finally, the same equality implies that $\deg
H\le g$. 
\end{proof}

\begin{corollary} Let $y^2+Q(x)y=P(x)$ be a pointed Weierstrass equation of $(C, w_0)$.
Let $W$ be the associated Weierstrass model. Then the equation is not 
pointed-minimal at $\p$ if and only if
there exist $p_0\in W_0(k)$ such that $\lambda(p_0)=2g+1$ and
$q_0\in W(p_0)(k)$ such that $\lambda(q_0)=2g+2$.
The model $W(p_0)(q_0)$ is then a pointed model and
$$v(\Delta_{W(p_0)(q_0)})=v(\Delta)-4g(2g+1).$$ 
\end{corollary}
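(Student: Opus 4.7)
The plan is to combine Lemma~\ref{p-c}, which parametrizes pointed equivalences by a scalar $u\in R$ with $v(u)\ge 0$, with the dilatation machinery of Lemma~\ref{mup} and the discriminant formula \eqref{eq:dp0}, applied in two successive steps. I would treat the two directions separately.

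For the direction $(\Leftarrow)$, I would construct $W(p_0)(q_0)$ by the two successive dilatations and verify each conclusion. The initial equation defines a normal $W$ with $\epsilon(W)=0$ because $P$ is monic of odd degree $2g+1$. At the first step $r=[\lambda(p_0)/2]=g$, and the pre-transformation $H$ provided by Lemma~\ref{mup}(2.c) automatically has $\deg H\le\min\{r,g+1\}=g$, so $P+QH-H^2$ stays monic of degree $2g+1$; dividing by $\pi^{2g}$ gives $P_1$ of degree $2g+1$ with leading coefficient $\pi$ and $\deg Q_1\le g$. At the second step $r'=g+1$, and the key observation is that the pre-transformation coefficient $e_{g+1}$ of Lemma~\ref{mup}(2.c) must satisfy $e_{g+1}^2\equiv a_{c_1,2g+2}=0\pmod\pi$ (since $\deg P_1\le 2g+1$), so one may take $e_{g+1}=0$, keeping $\deg H_2'\le g$; dividing the resulting polynomial by $\pi^{2g+2}$ then converts the leading $\pi$ of $P_1$ into $1$, yielding $P_2$ monic of degree $2g+1$ with $\deg Q_2\le g$. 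Both dilatations are centered at affine points, so the pole of $x$ (and hence $w_0$) is preserved, making $W(p_0)(q_0)$ a pointed model. Applying \eqref{eq:dp0} twice then gives $-2(2g+1)(g-1)-2(2g+1)(g+1)=-4g(2g+1)$.

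For the direction $(\Rightarrow)$, I would first apply Lemma~\ref{p-c} to a pointed-minimal $E_{\min}$ to obtain $(u,c,H)$ with $v(u)\ge 1$, and reduce to a single $\pi$-step. Setting $F=P-QH-H^2$, which is monic of degree $2g+1$, and expanding $F(x)=\sum_j f_j(x-c)^j$, the integrality of $E_{\min}$ forces $v(f_j)\ge 2v(u)(2g+1-j)\ge 2(2g+1-j)$. This is exactly what is needed for the transformation $x=\pi^2 x'+c$, $y=\pi^{2g+1}y'+H(x)$ to produce an integral pointed equation $E'$ of $(C,w_0)$ with $v(\Delta_{E'})=v(\Delta)-4g(2g+1)$. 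I would then factor $E\to E'$ as two dilatations, at $p_0\in W_0(k)$ with $x(p_0)=\bar c$ and at $q_0\in W(p_0)_0(k)$ with $x_1$-coordinate $0$. The multiplicities $\lambda(p_0)=2g+1$ and $\lambda(q_0)=2g+2$ are pinned down by a squeeze: the lower bounds come from the divisibility estimates on $F$ just derived (giving $\mu_c(F)\ge 2g+1$ and $\mu_c(Q+2H)\ge g+1$, hence $\lambda_c(Q+2H,F)=2g+1$); the upper bound $\lambda(p_0)\le 2g+1$ is automatic because $P+QH'-H'^2$ is monic of degree $2g+1$ for any $H'\in A[x]$ of degree $\le g$, and the analogous upper bound $\lambda(q_0)\le 2g+2$ inside $W(p_0)$ uses that $P_1$ has degree $2g+1$ with leading coefficient $\pi$.

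The main obstacle is the bookkeeping in the forward direction: reducing an arbitrary $v(u)\ge 1$ to the case $v(u)=1$, and the careful choice of the pre-transformation $H_2'$ at the second dilatation so that $W(p_0)(q_0)$ remains pointed rather than acquiring a spurious degree-$(2g+2)$ term in $P_2$. Both steps rely on the same principle, namely that the integrality constraints imposed by Lemma~\ref{p-c} more than suffice to execute cleanly one $\pi$-step of dilatation at a time.
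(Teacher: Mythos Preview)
Your approach is essentially the same as the paper's: both directions rely on Lemma~\ref{p-c} together with the dilatation machinery of Lemma~\ref{mup}, with the $(\Leftarrow)$ direction tracking degrees through the successive $H_2$-modifications to verify that $W(p_0)(q_0)$ is pointed, and the $(\Rightarrow)$ direction translating by $(c,H)$ so as to read off $\lambda(p_0)$ and $\lambda(q_0)$. The discriminant computation and the degree bookkeeping (in particular your observation that $a_{c_1,2g+2}=0$ forces $e_{g+1}=0$) match the paper's.

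One point needs tightening. In your $(\Rightarrow)$ direction you pin down $\lambda(p_0)=2g+1$ by a squeeze, the upper bound being that ``$P+QH'-H'^2$ is monic of degree $2g+1$ for any $H'\in A[x]$ of degree $\le g$''. But $\lambda(p_0)$ is by definition a maximum over \emph{all} $H'\in A[x]$, so as stated this does not yield the bound. The paper sidesteps the issue: after translating, $F$ is monic of degree $2g+1$, so $\mu_c(F)=2g+1$ exactly (the leading term forces $\le$, the divisibility estimates force $\ge$); since this is odd and $2\mu_c(Q+2H)\ge 2g+2>\mu_c(F)$, Lemma~\ref{mup}(2.a) gives $\lambda(p_0)=\mu_c(F)=2g+1$ directly. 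Likewise for $q_0$: the paper observes that $\mu_0(P_1)=2g+2$ is reached at the odd index $i=2g+1$, so Lemma~\ref{mup}(1) and (2.b) pin down $\lambda(q_0)=2g+2$, again with no separate upper bound needed. Your squeeze can be completed (e.g.\ by checking that the iteration in Lemma~\ref{mup}(2.c), applied to a monic $P$ of degree $2g+1$, always produces $H_2$ of degree $\le g$, so the maximum is realized with $\deg H'\le g$), but invoking Lemma~\ref{mup}(2.a)--(2.b) is cleaner and is what the paper does.
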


\begin{proof} Suppose that the equation is not pointed-minimal at
$\p$. By Lemma~\ref{p-c}, there exists a pointed equation over $A_\p$
$$
y_1^2+Q_1(x_1)y_1=P_1(x_1) 
$$
with $x=\pi^{2r} x_1+c$ for some $r\ge 1$ and 
$y=\pi^{r(2g+1)}y_1+H(x)$. Approximating  elements of $A_\p$ by that 
of $A$, one can suppose that all coefficients belong to $A$. 
Indeed, let $a \in A, U(x)\in A[x]$
be such that $$c-a \in \pi^{2r}A_\p, \quad H(x)-U(x)\in \pi^{r(2g+1)}
A_\p[x].$$
Consider the change of variables
$x_2=x_1+\pi^{-2r}(c-a)$ and $y_2=y_1+ \pi^{-r(2g+1)}(H-U)$ over
$A_\p$. We then have $x-\pi^{2r}x_2=a\in A$ and
$y-\pi^{r(2g+1)}y_2=U(x)\in A[x]$.

Translating $x$ by $c$ and replacing $y$
with $y-H(x)$, we can suppose that $c=0$ and $y= \pi^{r(2g+1)}y_1$.
This implies that
\begin{equation}
  \label{eq:QrPr}
  Q(x)=\pi^{r(2g+1)}Q_1(x/\pi^{2r}), \quad 
P(x)=\pi^{2r(2g+1)}P_1(x/\pi^{2r}).
\end{equation}
So $\mu_0(Q)\ge g+1$ and $\mu_0(P)=2g+1$ is odd. Therefore
if $p_0\in W(k)$ is the zero of $x$ (meaning that $p_0$
  is the unique point whose $x$-coordinate is zero), then
$\lambda(p_0)=2g+1$ (Lemma~\ref{mup}.(2.a)). An equation of $W(p_0)$ is 
then
$$
y_0^2+ \pi^{-g} Q(\pi x_0)  y_0 = \pi^{-2g} P(\pi x_0)
$$
with $x=\pi x_0$ and $y=\pi^{g}y_0$. Using the relations
\eqref{eq:QrPr}, we see that as elements of $A[x_0]$, we have
$\mu_0(\pi^{-g}Q(\pi x_0))\ge g+1$, and $\mu_0(\pi^{-2g}P(\pi x_0))=2g+2$
is reached at the odd degree $i=2g+1$.
Let $q_0\in W(p_0)(k)$ be the zero of $x_0$. 
Then $\lambda(q_0)=2g+2$ by  Lemma~\ref{mup}, (1) and (2.b.i). 
\medskip

Let us prove the converse. Under the hypothesis of the lemma, it is
enough to show that $W(p_0)(q_0)$ is defined by a pointed Weierstrass
equation. Its discriminant is given by Remark~\ref{Wp0-P}(2). 
Let $\bar{c}$ be the $x$-coordinate of
$p_0$. If $\min\{ 2\mu_c(Q), \mu_c(P)\}< \lambda(p_0)$,
we modify $Q, P$ as in Lemma~\ref{mup}(2.b.ii).
As the coefficient of degree $2g+2$ of $P(x)$ is
zero, we see by construction that $\deg H_0\le g$. Repeating the
algorithm if necessary, we get a new pair $(Q, P)$ such that
$\min\{ 2\mu_c(Q), \mu_c(P)\}=\lambda(p_0)$, $\deg Q\le g$
and $P(x)$ is monic of degree $2g+1$.

An equation of $W(p_0)$ is 
$$
y_0^2+ \pi Q_0(x_0)   y_0 = \pi P_0(x_0) 
$$
with
$$Q_0(x_0)=\pi^{-g-1} Q(\pi x_0+c), \quad 
P_0(x_0)=\pi^{-2g-1} P(\pi x_0+c)$$
and $P_0(x_0)$ is monic. 
Let $\bar{c}_1$ be the $x_0$-coordinate of $q_0$. 
Again, if 
$\lambda(q_0)$ is not reached by the pair $\pi Q_0, \pi P_0$, we
see in the construction of Lemma~\ref{mup}(2.b.ii) that $H_0(x_0)$ is divisible by $\pi$ and
has degree $\le g$. So we get a new pair $\pi Q_1(x_0), \pi P_1(x_0)$
reaching $\lambda(q_0)$ and
such that $\deg Q_1\le g$ and $P_1(x_0)$ is monic of degree $2g+1$. 
Then $W(p_0)(q_0)$ is defined by the equation 
$$
y_1^2+ \pi^{-g} Q_1(\pi x_1+c_1)   y_0 = \pi^{-2g+1} P_1(\pi x_1+c_1) 
$$
which is a pointed Weierstrass equation. 
\end{proof}

\begin{remark} Similarly to Lemma~\ref{mup}(3), because $\deg Q\le g$
  and $P(x)$ is monic of degree $2g+1$, one can show that
  for all $p_0\in W_0(k)$ (resp. $q_0\in W(p_0)(q_0)(k)$),
  $\lambda(p_0)\le 2g+1$ (resp. $\lambda(q_0)\le 2g+2$). Moreover, by
  Remark~\ref{bms}.\ref{big-l}, there is at most one such point.
\end{remark}

\begin{remark}
  Suppose $\p$ is odd. It defines an absolute value $|. |_\p$ on $K$.
      Fix an algebraic closure $K^{\mathrm{alg}}$ of $K$ and an extension of
      $|. |_\p$ to it.  
  Then a pointed Weierstrass equation
  $y^2=P(x)$ is pointed-minimal at $\p$ if and only if
  there is no disc centered  in $A$ of radius
  $\le |\pi|_\p^2$ containing
  all roots of $P(x)$ in $K^{\mathrm{alg}}$.

Indeed, if such a disc, centered in some $c\in A$ exists, then the change of variables $x=\pi^2 x_1+c$, $y=\pi^{2g+1} y_1$, leads to a pointed Weierstrass equation of discriminant $\pi^{-4g(2g+1)} \Delta$. Conversely, if $y^2=P(x)$ is not pointed-minimal, then the minimal one is given by a change of variables
    as in Lemma~\ref{p-c} with $H(x)=0$ and $v(u)>0$. As $A$ is dense in $A_\p$, one can take $c\in A$. Translating $x$ by $c$ we can suppose that $x=u^2x_1$. As $P(u^2x_1)\in u^{2(2g+1)}A_\p[x_1]$. This implies that the roots
$\alpha\in K^{\mathrm{alg}}$ of $P(x)$ all have $|\alpha|_\p\le |u|_\p^2\le |\pi|_\p^2$. 
  \end{remark}

\end{section}

\begin{section}{Minimization algorithm} \label{mini-algo}

We start with a Weierstrass equation
\begin{equation}\label{eq:start3} 
  y^2+Q(x)y=P(x)
\end{equation}
of $C$ over $A$, with discriminant $\Delta$ and
$\deg Q\le g+1$, $\deg P\le 2g+2$.
Note that the formula \eqref{eq:d1} implies that 
if $v_\p(\Delta)<2(2g+1)$ 
(resp. $v_\p(\Delta)<4(2g+1)$) if $g$ is even (resp. if $g$ is odd),
then the equation is minimal at $\p$.  

Let us describe the minimization algorithm.
It consists in minimizing successively at even primes dividing $\Delta$,
then at odd primes dividing $\Delta$, and finally we globalize in
\S~\ref{final-s} using Lemma~\ref{odd-2-even}. 
The local minimization is done  step by step as follows. Start with an integral Weierstrass equation and a prime
$\p$ dividing $\Delta$. 
\begin{enumerate}[\rm (i)]
\item First normalize the equation at $\p$ (Algorithm~\ref{algo-normalization}).
\item Candidates for rational points aways from $x=\infty$ having big or medium multiplicities 
are found by Algorithm~\ref{algo-find-c-even} and \ref{algo-find-c-odd}.
For even primes, whether the points at infinity might have big or medium
multiplicities are checked in Test~\ref{algo-c-OK}
(after inverting $x$ to reduce to the points with $x=0$). 
For odd primes this task is much simpler and is included directly in
\ref{algo-odd}(\ref{algo-I}).
\item For a candidate $p_0$ found above, the actual multiplicity is then computed in Algorithm~\ref{algo-lambda} for even
primes (no need of algorithme at odd
primes). Test~\ref{algo-lambda-OK} tells us whether $\lambda$ is small
or not. If yes we move to the next candidate. 
\item  As soon as a rational point $p_0$ with big $\lambda$ is found, we consider a new
normal Weierstrass equation, corresponding to $W(p_0)$ (Lemma~\ref{mup4}). Then we start
again the algorithm with this new equation. In the new equation, the
points at infinity always have small multiplicities.

We have $v(\Delta_{W(p_0)})\le v(\Delta_W)$, with equality if and only
$\lambda(p_0)$ is medium. In this case, either $W(p_0)$
(and hence $W$) is minimal, or there exists a rational point
$p_1\in W(p_0)(k)$ with big multiplicity, so that after the next change of
variables, the valuation of the discriminant will decrease strictly. After finitely
many loops we get in Step~(\ref{p-end}). 
\item\label{p-end} If there is no point with big or medium
  multiplicity, then the equation is minimal at $\p$.  
\end{enumerate}

\subsection{Normalization algorithm}  

\begin{algo}[Lemma~\ref{normalize_at_2}]\label{algo-normalization}{\hskip 4pt}

{\bf Input} : an even prime $\p$ of $A$, a pair of polynomials $Q, P$ as in
Equation~\eqref{eq:start3}, and $e_0\in A$
(corresponding to the $e$ in Equation~\eqref{eq:cvar} for even primes.) 
\smallskip
    
{\bf Output}: a new pair $Q, P$ defining a Weierstrass model normal at
$\p$ satisfying Lemma~\ref{normalize_at_2}(1), 
and the new $e_0$. 
\medskip

\begin{enumerate} 
  \item If $v(Q)=0$, go to (3). 
  \item Otherwise, 
    \begin{enumerate} 
    \item if $v(P)=0$, and $\bar{P}\notin k[x^2]$, go to (3); 
    \item if $v(P)=0$ and $\bar{P}(x)\in k[x^2]$. Let
      $H(x)=\sum_{0\le i\le g+1} b_ix^i$ with
      $\bar{b}_i^2=\bar{a}_{2i}$ ($k$ is perfect of characteristic $2$). Then
            $$Q\gets Q-2H, \quad P\gets P+QH-H^2.$$
            (The new pair satisfies $v(Q), v(P)>0$.)
     \item if $v(P)=1$, go to (3); 
    \item if $v(P)\ge 2$, let $r=[\frac{1}{2}\min \{ 2v(Q), v(P)\}]$. 
      Then
      $$e_0\gets \pi^{r}e_0, \quad Q\gets \pi^{-r}Q, \quad P\gets \pi^{-2r}P.$$
Restart at (1).  
\end{enumerate}
\item Output $Q, P, e_0$ 
  and $\epsilon_\p=\min\{ v(Q), v(P)\}$.
  \end{enumerate}
\end{algo}

The algorithm terminates because when we need to restart the loop
(only at the step (2.d)),  the discriminant of the new equation is
equal to the previous one divided by $\pi^{4r(2g+1)}$ with $r>0$.
\medskip

Recall that at odd primes, the normalization of $y^2=F(x)$ consists just in dividing
both sides by a biggest odd square of $\mathrm{cont}(F)$.

\subsection{Computing the multiplicity \texorpdfstring{$\lambda$}{lambda}}
Let $\p$ be a prime of $A$, let $p_0=(\bar{c}, \bar{d})$ be a solution
of Equation~\eqref{eq:start3} mod $\p$ with $c, d\in A$. 
We want to compute $\lambda(p_0)$ (see Definition~\ref{def:l}). See also
Definition~\ref{def:lambda} for the notation $\mu_c$.
Recall that $\lambda(p_0)=\mu_c(4P+Q^2)$ if $\p$ is odd. 

\begin{algo}[Lemma~\ref{mup}] \label{algo-lambda} {\hskip 4pt}

  {\bf Input:} an even prime $\p$, a pair $Q, P$ satisfying
Lemma~\ref{normalize_at_2}(1) at $\p$ and $\bar{c}\in k$. 
 \smallskip

 {\bf Output:} the multiplicity $\lambda(p_0)$ and new pair $Q, P$
 such that $\lambda(p_0)=\min\{ 2\mu_c(Q), \mu_c(P)\}$. 
\medskip

\begin{enumerate}
\item Compute $\mu_c(Q), \mu_c(P)$;  
\item if $2\mu_c(Q)\le \mu_c(P)$, then $\lambda(p_0)=2\mu_c(Q)$, go to (\ref{end-lambda}); 
\item if $\mu_c(P)$ is odd, then  $\lambda(p_0)=\mu_c(P)$, go to (\ref{end-lambda}); 
 \item write $P(x)=\sum_i a_{c,i}(x-c)^i$. If $\mu_c(P)=v(a_{c,i})+i$
for some odd $i$, then $\lambda(p_0)=\mu_c(P)$, go to (\ref{end-lambda}); 
\item set $H_0(x)=\sum_{i\le r} e_{c,i}(x-c)^{i}$ where the
  sum runs through the indexes such that $v(a_{c,2i})+2i=2r$ and where $e_{c,i}\in \pi^{r-i}A$ satisfy 
  $$(\pi^{i-r}e_{c,i})^2 + \pi^{2i-2r}a_{c,2i}\equiv 0 \mod \pi.$$
Then
$$Q\gets Q-2H_0,  \quad P\gets P+QH_0-H_0^2.$$
Go back to (1).
\item \label{end-lambda} Output $Q, P$ and $\lambda=\lambda(p_0)$. 
\end{enumerate}
\end{algo}
\medskip

This algorithm does not change $\epsilon_\p$ and the conditions
in Lemma~\ref{normalize_at_2}(1). 
The test below will say if the multiplicity $\lambda(p_0)$ is small
({\tt true}) or too big ({\tt false}) in which case our equation is
(probably) not minimal at $\p$. 

\begin{test}[Remark~\ref{bms}] \label{algo-lambda-OK} \hskip 4pt 

  {\bf Input:} a multiplicity $\lambda$ and $\epsilon_\p$.
  \smallskip

  {\bf Output:} {\tt true} or {\tt false}.
  
\begin{enumerate}
\item If $\lambda\le g+1$, output {\tt true} ($\lambda$ is small);
\item if $g$ is even, output {\tt false} ($\lambda$ is big);
\item if $\lambda\ge g +3$, output {\tt false} ($\lambda$ is big);
\item if $\epsilon_\p=1$, output {\tt true} ($\lambda$ is small);
\item output {\tt false} ($\lambda$ is medium).
\end{enumerate}
\end{test}

\subsection{Finding \texorpdfstring{$p_0$}{p0} with big or medium \texorpdfstring{$\lambda$}{lambda}} To use the minimality criterion
(Proposition~\ref{min-cond}), we only have to compute $\lambda$ for
the $\bar{c}$'s given by the algorithms below.

\begin{algo}[Lemma~\ref{l-d}(\ref{l-d-even})] \label{algo-find-c-even}  
  \hskip 4pt 

  {\bf Input:} An even prime $\p$ of $A$, a pair $Q, P$ satisfying
Lemma~\ref{normalize_at_2}(1). 
\smallskip

{\bf Output:} $\epsilon_\p$ and the elements $\bar{c}\in k$ such that the
corresponding $\lambda(p_0)$ may not be small. There are at most two such
$\bar{c}$'s. 
\smallskip
  
\begin{enumerate}
\item Output  $\epsilon_\p=\min\{v(Q), v(P)\}$.
\item If $\bar{Q}\ne 0$, output the $\bar{c}\in k$
  such that
    $$\ord_{\bar{c}}(\overline{Q}) \ge (g+2)/2.$$
  \item If $\bar{Q}=0$ and $\bar{P}\notin k[x^2]$,
   output the $\bar{c}\in k$
  such that 
  $$ \ord_{\bar{c}}(\overline{P'}) \ge  g+1$$
where $P'$ is the derivative of $P$. 
\item If $\epsilon_\p=1$ (so $\bar{Q}=\bar{P}=0$), output the $\bar{c}\in k$ such that 
 $$\ord_{\bar{c}}(\overline{\pi^{-1}Q})\ge g/2
\quad \text{and }  \ord_{\bar{c}}(\overline{\pi^{-1}P})\ge
    g+1.$$      
\end{enumerate}
\end{algo}
\medskip

\begin{algo}[Lemma~\ref{l-d}(\ref{l-d-odd})] \label{algo-find-c-odd}  \hskip 4pt

{\bf Input:} An odd prime $\p$ of $A$ and $F\in A[x]$ such that $v(F)\le 1$. 
\smallskip

{\bf Output:} $\epsilon_\p$ and the elements $\bar{c}\in k$ such that the
corresponding $\lambda(p_0)$ may not be small. There are at most two such
$\bar{c}$'s. 
\smallskip

  \begin{enumerate}
\item Output $\epsilon_\p=v(F)$.  
\item Output the zeros in $k$ of $\overline{\pi^{-\epsilon_\p}F(x)}$
of order $\ge g+2-\epsilon_\p$. 
\end{enumerate}
\end{algo}
\medskip

\begin{test}[Lemma~\ref{l-d}(\ref{l-d-even})] \label{algo-c-OK}
  \hskip 4pt 
  
  {\bf Input:} same as in Algorithm~\ref{algo-find-c-even}. 
  \smallskip

  {\bf Output:} {\tt true}, if $0$ belongs to the list returned by
  Algorithm~\ref{algo-find-c-even}, and {\tt false} otherwise. 

  \begin{enumerate}
\item If $\bar{Q}\ne 0$, output {\tt true} if 
  $$\ord_{0}(\overline{Q}) \ge (g+2)/2.$$
  and {\tt false} otherwise. 
\item If $\bar{Q}=0$ and $\bar{P}\notin k[x^2]$,
  output {\tt true} if
  $$ \ord_{0}(\overline{P'}) \ge  g+1,$$
and {\tt false} otherwise. 
\item If $\epsilon_\p=1$, output {\tt true} if 
 $$\ord_{0}(\overline{\pi^{-1}Q})\ge g/2 
    \quad \text{and } \ord_{0}(\overline{\pi^{-1}P})\ge     g+1$$
and {\tt false} otherwise.       
\end{enumerate}
\end{test}

\subsection{Minimization at even primes}\label{minimizing}

\begin{algo} \label{algo-even} \hskip 4pt

  {\bf Input:} Equation~\eqref{eq:start3}.
  \smallskip

  {\bf Output:} A new equation minimal at even primes
together with
  the change of variables on $x$ given by a matrice $M_0\in
\mathrm{M}_{2\times 2}(A)$, and the multiplicative factor $e_0\in A$
in the change of variables on $y$. 
\smallskip

The algorithm will produce a new equation $y_0^2+Q_0(x_0)y_0=P_0(x_0)$ with $x=M_0x_{0}$,
$y=(e_0y_{0}+H(x_{0}))/(c_0x_{0}+d_0)^{g+1}$, where $(c_0, d_0)$ is
the second row of $M_0$,  for some $H(x_0)\in A[x_0]$.
\smallskip
  
Let $\p_1, \dots, \p_m$ be the even primes dividing the discriminant
$\Delta$ of Equation~\eqref{eq:start3}. 
We start with $i=1$, $M_0=I_2\in \mathrm{Gl}_2(A)$, $e_0=1$. 
\medskip

\begin{enumerate}[{\rm (I)}]
\item \label{algo2-0} Let $\p=\p_i$. Run
  Algorithm~\ref{algo-normalization}. If
      $v(\Delta)<2(2g+1)$ (for even $g$) or $v(\Delta)<4(2g+1)$ (for
      odd $g$), goto (\ref{algo2-III}).
\item\label{algo2-I}   Let $Q_\infty(x)=x^{g+1}Q(1/x)$ and
  $P_\infty(x)=x^{2g+2}P(1/x)$. Run Test~\ref{algo-c-OK} with the pair $Q_\infty(x), P_\infty(x)$. 
\begin{enumerate}
\item  Go to (\ref{algo2-II}) if we get {\tt false}. 
\item Otherwise, 
run Algorithm~\ref{algo-lambda} for the pair $Q_\infty(x), P_\infty(x)$ at $\bar{c}=0$.
\item Run Test~\ref{algo-lambda-OK}. If we get {\tt true},
go to (\ref{algo2-II}).  
\item Otherwise, set $r=[\lambda/2]$,
$$ Q(x)\gets \pi^{-r}Q_\infty(\pi x), \quad    P(x) \gets \pi^{-2r}P_\infty(\pi x).$$
where $Q_\infty(x), P_\infty(x)$ are the new polynomials given at (b), 
$$ e_0\gets e_0\pi^{r},   \quad 
M_0 \gets M_0\begin{pmatrix} 
    0 & 1\\ \pi & 0
  \end{pmatrix}.
$$ 
Go to (\ref{algo2-II}). 
\end{enumerate}
\item\label{algo2-II} 
Run Algorithm~\ref{algo-find-c-even}. 
\begin{enumerate}
\item Pick the first $\bar{c}$. Run Algorithm~\ref{algo-lambda} for this $\bar{c}$.
Run Test~\ref{algo-lambda-OK}. If we get {\tt true}, 
go back to (a) with the next $\bar{c}$. If there is no 
$\bar{c}$ left, go to (\ref{algo2-III}).  
\item As soon as we get {\tt false} for some $\bar{c}$,
  set $r=[\lambda/2]$. Then 
  $$
Q(x)\gets \pi^{-r}Q(\pi x+c), \quad P(x)\gets \pi^{-2r}P(\pi x + c). 
  $$
$$ e_0\gets e_0\pi^{r},    \quad
    M_0 \gets M_0\begin{pmatrix} 
    \pi & c\\ 0 & 1
  \end{pmatrix}.
$$ 
Go back to (\ref{algo2-II}).
  \end{enumerate}
\item \label{algo2-III} If $i<m$, then $i\gets i+1$ and 
go back to (\ref{algo2-0}). Otherwise output $Q, P$, $e_0$ and $M_0$. 
\end{enumerate}
\end{algo}

\subsection{Minimization at odd primes}\label{minimizing-O}
Suppose $A$ has both even and odd primes. Let 
$$F(x)=4P(x)+Q(x)^2$$  
where $P, Q$ are the polynomials
returned by Algorithm~\ref{algo-even}. 
\medskip

\begin{algo} \label{algo-odd} \hskip 4pt 

  {\bf Input:} $F(x)$ as above.
\smallskip

{\bf Output:} An equation minimal at all odd primes of $A$, together
with the changes of variables on $x$ and $y$ leading to the new equation.
\medskip

Number the odd primes $\p_1, \dots, \p_n$ dividing
the discriminant $\Delta$ of Equation~\eqref{eq:start3}.
Start with $M_1=I_2\in \mathrm{Gl}_2(A)$, $e_1=1$ and $i=1$.

\begin{enumerate}[{\rm (I)}] 
\item{(Normalization at all odd primes)} Let $s^2$ be a greatest odd square dividing $\mathrm{cont}(F)$. 
  $$ F(x) \gets s^{-2}F(x), \quad e_1\gets e_1s. $$ 
\item\label{algo-I} Let $\p=\p_i$. If
      $v(\Delta)<2(2g+1)$ (for even $g$) or $v(\Delta)<4(2g+1)$ (for
      odd $g$), goto (\ref{algo2-III}). Otherwise, let $\epsilon_\p=v(\mathrm{cont}(F))$.
  If $$\deg \overline{\pi^{-\epsilon_\p}F(x)}\ge g+1+
  \epsilon_\p,$$
go to (\ref{algo-II}). Otherwise, compute $\lambda:=\mu_0(x^{2g+2}F(1/x))$. 
  \begin{enumerate}
\item Run Test~\ref{algo-lambda-OK}. If we get {\tt true},
    go to (\ref{algo-II}).  
  \item Otherwise, set $r=[\lambda/2]$. Then
    $$F(x)\gets \pi^{-2r} (\pi x)^{2g+2}F(1/(\pi x))$$ 
    $$e_1\gets \pi^{r}e_1, \quad M_1 \gets M_1\begin{pmatrix}
    0 & 1\\ \pi & 0
  \end{pmatrix}.
    $$
  Go to (\ref{algo-II}). 
  \end{enumerate}
\item  \label{algo-II}  Run Algorithm~\ref{algo-find-c-odd}.
 \begin{enumerate}
\item Pick the first $\bar{c}$ and compute $\lambda:=\mu_c(F)$.
Run Test~\ref{algo-lambda-OK}. If we get {\tt true},
pass to the next $\bar{c}$ and restart at (a). If there is no $\bar{c}$ left, go to (\ref{algo-III}). 
\item Otherwise we get {\tt false} for some $\bar{c}$. 
Set $r=[\lambda/2]$. Then 
   $$F(x)\gets \pi^{-2r} F(\pi x+c)$$ 
    $$e_1\gets \pi^{r}e_1, \quad M_1 \gets M_1 \begin{pmatrix}
    \pi &  c \\
    0 & 1 
  \end{pmatrix}.
    $$
    Go back to (\ref{algo-II}).
\end{enumerate}
\item \label{algo-III} If $i<n$, then $i\gets i+1$ and go back to
  (\ref{algo-I}). Otherwise output $M_1$, $e_1$ and $F(x)$. 
\end{enumerate}
\end{algo}
\medskip

\subsection{Final step}\label{final-s} We give a minimal equation of $C$ over $A$.

\begin{enumerate}
\item If there are only even primes dividing the initial $\Delta$,
  Algorithm~\ref{algo-even} already returned a global minimal
  equation of $C$ over $A$.
\item Suppose there are odd primes dividing the initial $\Delta$. Let
  $Q_0, P_0$ denote the pair $Q, P$ returned by Algorithm~\ref{algo-even} and let $e_1\in A$ and
$M_1={\begin{pmatrix} a_1 & b_1 \\ c_1 & d_1 \end{pmatrix}}\in
\mathrm{M}_{2\times 2}(A)$ be returned
by Algorithm~\ref{algo-odd}. Note that $e_1$ and $\det M_1$ are odd, 
and $e_1\notin A^*$. Let $m\in A$ be such that
\begin{equation}
  \label{eq:m}
  4m \equiv 1 \mod e_1. 
\end{equation} 
For computations, $m$ should be chosen as small as
    possible, for whatever measure of the size. Let 
$$ Q_1(x_1)=e_1^{-1}(1-4m)(c_1x_1+d_1)^{g+1}Q_0(x) $$
and
$$ P_1(x_1)=e_1^{-2}(c_1x_1+d_1)^{2g+2}\big(P_0(x)+(2m-4m^2)Q_0(x)^2\big)$$
where $x=(a_1x_1+b_1)/(c_1x_1+d_1)$. Then 
\[  y_1^2+Q_1(x_1)y_1=P_1(x_1) 
\]
is a minimal equation of $C$ over $A$. 
\end{enumerate}

\begin{remark} Let $y^2+Q(x)y=P(x)$ be the equation we start with. The
  change of variables to the minimal equation above is given as
  follows: 
$$ x= \dfrac{ax_1+b}{cx_1+d}, \quad y = \dfrac{ey_1+H(x_1)}{(cx_1+d)^{g+1}}$$ 
where
$$ \begin{pmatrix}  a & b \\ c & d \end{pmatrix} =M_0M_1, \quad e=e_0e_1,
$$
and $H(x_1)\in A[x_1]$ is determined by
$$   2H(x_1)=eQ_1(x_1)-(cx_1+d)^{g+1}Q((ax_1+b)/(cx_1+d)), 
$$
by comparing the traces of $y$ and of $y_1$ in the extension $K(x)[y]$
over $K(x)$.

The minimal discriminant $\Delta_{\mathrm{min}}$ is given by
$$\Delta_{\mathrm{min}}=e^{-4(2g+1)}(ad-bc)^{2(g+1)(2g+1)}\Delta$$ 
in terms of the initial discriminant $\Delta$ of Equation~\eqref{eq:start3}.
\end{remark}

\begin{example} Consider the equation
  $$ y^2+ 2^4\cdot 11 \cdot 13y=5^6\cdot 17^{3}x^5$$
  over $\mathbb Q$. It defines a genus $2$ curve of discriminant
  $$\Delta=2^{32}\cdot 5^{41}\cdot 11^8\cdot 13^8\cdot 17^{18}.$$
  The
command {\tt hyperellminimalmodel} in \cite{pari} gives 
a minimal equation over $\mathbb Z$:
$$
z^2+t^3z=1477440t^6 + 20t, 
$$
with the changes of variables: 
$$
x=\dfrac{2^2}{5\cdot 17t}, \quad 
y=\dfrac{2^4\cdot 5^3\cdot 17^2z+2^4\cdot 3^5\cdot 5^4\cdot 17^2t} {(5\cdot 17t)^3}
$$
with minimal discriminant equal to $2^{12}\cdot 5^{11}\cdot 11^8\cdot
13^8\cdot 17^8$. In particular, the initial equation is minimal away
from $2, 5$ and $17$. 
\end{example}

\begin{remark}
Our algorithm always terminates. The total  
number of iterations is roughly bounded above by
$\sum_{\p}v_\p(\Delta)$
where $\Delta$ is the discriminant of the initial equation. 
\end{remark}

\begin{remark} If in our algorithm we input an arbitrary pair of polynomials
  $Q(x), P(x) \in A[x]$, we must check if $y^2+Q(x)y=P(x)$ defines
  a smooth hyperelliptic curve $C$ and, if necessary,
      construct a new Weierstrass equation with polynomials $Q_0, P_0$ such that 
      $\deg Q_0\le g(C)+1$ and $\deg P_0\le 2g(C)+2$. 
  
Suppose for simplicity that  $\mathrm{char}(K)\ne 2$. Then the
smoothness is detected by the non-vanishing of
$\disc(4P+Q^2)$. Suppose from now on that this is the case. Then
$g=g(C)$ is given by $g=[(d-1)/2]$ where $d:=\deg (4P+Q^2)$.

Now if $\deg Q>g+1$, then $\deg P=2\deg Q>2g+2$. 
Write $Q(x)=Q_0(x)+2E(x)$ with $\deg Q_0\le g+1$ and $x^{g+2} \mid E$. 
Let $$y=y_0-E(x).$$
Then
$$
y_0^2+Q_0(x)y_0=P_0(x)
$$
where $P_0=P+QE-E^2$ satisfies $\deg P_0\le 2g+2$, is a new equation
of $C$ satisfying the requirement on the degrees of $Q_0, P_0$.

It reminds to show that $Q_o, P_0\in A[x]$. 
    It is enough to show that $E\in A[x]$. Consider a basis $\{ 1, z \}$ of the
    integral closure of $A[x]$ in $K(x,y)=K(C)$ with
    $\mathrm{Tr}_{K(C)/K(x)}(z) \in A[x]$ of degree $\le g+1$.
    See \cite{LTR}, Lemme (1.b), page 4579. Then $y=ez+H(x)$ with $e\in A$ and $H(x)\in A[x]$. So
    $$Q(x)=-\mathrm{Tr}_{K(C)/K(x)}(y)=-e\mathrm{Tr}_{K(C)/K(x)}(z)-2H(x)$$ 
    and $-E$ consists in the terms of degree $\ge g+2$ in $H(x)$.
    Therefore $E\in A[x]$. 
\end{remark}

\end{section}

\begin{section}{Minimization algorithm for pointed Weierstrass equations}

Fix $(C, w_0)$ as in \S~\ref{pointed-min}  and let
$$
y^2+Q(x)y=P(x) 
$$
be a pointed Weierstrass equation of $(C, w_0)$ over $A$ of discriminant
$\Delta$ (see the beginning of \S~\ref{mini-algo}). 
By Lemma~\ref{p-c}, if $v(\Delta)<4g(2g+1)$, then the equation is
pointed-minimal at $\p$. 

\begin{algo}[Even primes] \label{algo-p-even} \hskip 4pt

  {\bf Input:} The above equation.
  \smallskip

  {\bf Output:} A new pointed Weierstrass equation, minimal at even
  primes, and the change of variables $L_0(x)$ on $x$. 
  \smallskip
  
Let $\p_1, \dots, \p_n$ be the even prime divisors of $\Delta$.  Start
with $i=1$.  Let $L_0(x)=x$. 
  \begin{enumerate}[{\rm (I)}]
\item Let $\p=\p_i$. If $v(\Delta)<4g(g+1)$, go to (III).
\item If $\bar{Q}\ne 0$ or if $\bar{P}(x)$ is not a $(2g+1)$-th power,
 go to (III). Otherwise, $\bar{P}(x)=(x-\bar{c}_1)^{2g+1}$ for some
 $c_1\in A$. Run Algorithm~\ref{algo-lambda} at $\bar{c}_1$. 
  \begin{enumerate} 
  \item If $\lambda<2g+1$, go to (III).  
  \item Otherwise we have $\lambda=2g+1$. 
    Let
    $$ 
    Q_1(x)=\pi^{-g}Q(\pi x+c_1), \quad P_1(x)=\pi^{-2g}P(\pi x+c_1).$$
    \begin{enumerate}
  \item If $\overline{\pi^{-1}P}_1(x)$ is not a $(2g+1)$-th power, go to (III).
\item Otherwise let $\overline{\pi^{-1}P}_1(x)=(x-\bar{c})^{2g+1}$ for
  some $c\in A$. Run   Algorithm~\ref{algo-lambda} for the pair $Q_1,
  P_1$ at $\bar{c}$.  If $\lambda<2g+2$, go to (III). 
  \item Otherwise 
$$ \quad   Q(x)\gets \pi^{-(g+1)}Q_1(\pi x+c), \quad P(x)\gets
\pi^{-(2g+2)} P_1(\pi x+c),$$
$$ L_0(x) \gets \pi^2 L_0(x)+\pi c+c_1.$$ 
Go back to (II). 
\end{enumerate}
  \end{enumerate}
\item If $i<n$, then $i\gets i+1$, go back to (I).  Otherwise output $Q, P$
  and $L_0$. 
  \end{enumerate} 
\end{algo}

\begin{algo}[Odd primes] \label{algo-p-o} Suppose there are even and odd primes in $A$.

{\bf Input:} The pair $Q, P$ returned by Algorithm~\ref{algo-p-even}.
\smallskip

{\bf Output:} An equation minimal at all odd primes of $A$, together
with the changes of variables leading to the new equation. 
\smallskip   

Let
$$F=4P+Q^2.$$
(It has leading coefficient equal to $4$, but this does not matter.) 
Let $\p_1, \dots, \p_m$ be the odd prime divisors of $\Delta$. Let $L_1(x)=x$, let $i=1$. 
  \begin{enumerate}[{\rm (I)}] 
  \item Let $\p=\p_i$. If $v(\Delta)<4g(g+1)$, go to (III).
\item\label{algo-p-odd-2} If $\bar{F}$ has no root of order $2g+1$, go to  (\ref{algo-p-odd-R}).
Otherwise, let $\bar{c}\in k$ be the root of $\bar{F}$.
Write $$F(x)=\sum_i a_{c,i}(x-c)^i.$$  Let
$$\theta=\min \left\{ \dfrac{v(a_{c,i})}{(2g+1)-i}
  \ | \ 0\le i\le 2g \right\}.$$
Let $r=[\theta/2]$.
\begin{enumerate}
\item 
  If $r=0$, go to (\ref{algo-p-odd-R}).
\item Otherwise,   
  $$F(x) \gets \pi^{-2r(2g+1)}F(\pi^{2r}x+c), \quad
  y \gets \pi^{-r(2g+1)}$$
  $$ L_1(x)\gets \pi^{2r}L_1(x)+c$$ 
 and restart at (\ref{algo-p-odd-2}). 
\end{enumerate}
\item \label{algo-p-odd-R} If $i<m$, then $i\gets i+1$ and go back to (I). Otherwise
output $F(x)$ and $L_1(x)$. The equation $z^2=\frac{1}{4}F(x)$ is pointed-minimal
at all odd primes. 
\end{enumerate} 
\end{algo}

\medskip

\noindent{\bf Final step.} Suppose that $A$ has even and odd primes.
Denote by $Q_0, P_0$ be the pair $Q, P$ returned by
Algorithm~\ref{algo-p-even} and let
$L_1(x)=u_1^2x+c_1$ be returned by Algorithm~\ref{algo-p-o}.
Then $u_1$ is odd. If $u_1\in A^*$, then $y_0^2+Q_0(x)y_0=P_0(x)$ is pointed-minimal over $A$.
Suppose $u_1\notin A^*$. Let $m\in A$ be such that 
$$
4m \equiv 1 \mod u_1^{2g+1}.  
$$
Then if we define
$$Q_1(x_1):=u_1^{-(2g+1)}(1-4m)Q_0(u_1^2x_1+c_1)\in A[x_1], $$
$$P_1(x_1):=u_1^{-2(2g+1)}(P_0(u_1^2x_1+c_1)+
(2m-4m^2)Q_0(u_1^2x_1+b)^2)\in A[x_1]$$  
we have that
$$y_1^2+Q_1(x_1)y_1 = P_1(x_1)$$  
is a pointed-minimal equation of $(C, w_0)$ over $A$. 

\begin{remark} Let $y^2+Q(x)y=P(x)$ be the pointed equation we start
  with, of discriminant $\Delta$. The
changes of variables to the above minimal pointed equation is given as
follows: 
$$ x= u^2x_1+c, \quad y = u^{2g+1}y_1+H(x_1)$$ 
where
$$ 
u^2x_1+c=L_0(L_1(x_1)) 
$$
(choose any $u$ satisfying the above equation), and $H(x_1)\in A[x_1]$ is determined by
$$ 
  2H(x_1)=u^{2g+1}Q_1(x_1)-Q(u^2x_1+c), 
$$
by comparing the traces of $y$ and of $y_1$ in the extension $K(x)[y]$
over $K(x)$.
The minimal pointed discriminant $\Delta_{\mathrm{min}}$ is given    by
    $$
\Delta_{\mathrm{min}}=u^{-4g(2g+1)}\Delta.$$ 
\end{remark}

\end{section}

\begin{section}{Statements} 
\noindent No external dataset is used.
\medskip

\noindent No conflicts of interest to declare. 
\end{section}

\end{document}